\documentclass[oneside]{amsart}
\usepackage{amsmath,amssymb,color,amsthm,graphicx,cite}
\usepackage[T1]{fontenc}
\usepackage[utf8]{inputenc}
\usepackage{color,bm}
\graphicspath{{../../img_pdf/}{../img_pdf/}{./img_pdf/}}
\usepackage{enumerate} 
\usepackage[backref]{hyperref}
\RequirePackage{luatex85}
\usepackage[all]{xy}
\newtheorem{theorem}{Theorem}[section]
\newtheorem{proposition}[theorem]{Proposition}
\newtheorem{lemma}[theorem]{Lemma}
\newtheorem{corollary}[theorem]{Corollary}

\newtheorem{remark}{Remark}

\newtheorem{example}{Example}

\theoremstyle{definition}
\newtheorem{definition}{Definition}

\def\F{\mathcal{F} }

\def\R{\mathbb{R} } 
 
\def\Z{\mathbb{Z} }

\def\R{\mathbb{R} }

\makeatletter
\@namedef{subjclassname@2020}{%
  \textup{2020} Mathematics Subject Classification}
\makeatother

\author{Yusuke Imoto}
\address{Institute for the Advanced Study of Human Biology, Kyoto University, Yoshida Konoe-cho, Sakyo-ku, Kyoto, 606-8501, Japan}
\email{imoto.yusuke.4e@kyoto-u.ac.jp}

\author{Tomoo Yokoyama}
\address{Department of Mathematics, Faculty of Science, Saitama University, Shimo-Okubo 255, Sakura-ku, Saitama, 338-8570, Japan}
\address{Department of Mathematics \& Statistics, McMaster University, 1280 Main Street West, Hamilton, Ontario, L8S 4L8, Canada}
\address{The Fields Institute for Research in Mathematical Sciences, 222 College Street, Toronto, Ontario, M5T 3J1, Canada}
\email{tyokoyama@rimath.saitama-u.ac.jp}

\usepackage{color,bm}

\title[Multi-parameter persistence for maximizing effects from inputs]{Multi-parameter persistence in dynamical systems for maximizing effects of control inputs}

\subjclass[2020]{55N31, 37M05, 49L20, 37B25}

\keywords{Multi-persistence, filtrations, minimizing paths, dynamical systems}

\begin{document}

\begin{abstract}
We introduce a new topological method to naturally extend a partial function $h \colon X \rightharpoonup [-\infty, \infty]$ on a ``generalization'' of a metric space $X$ equipped with a dynamical system $f \colon X \rightharpoonup X$, to a function $h_f^{\varepsilon\text{-}\ell^p} \colon X \to [-\infty,\infty]$ with parameters $\varepsilon,p$, which allows us to apply existing topological data analysis techniques to functions defined on the entire space. Moreover, given a function $h$ that evaluates the ``quality'' of points within $\mathop{\mathrm{dom}}h$, using this extended function, one can construct a sufficient condition for the existence of an optimal $\varepsilon$-perturbation path from any point into $\mathop{\mathrm{dom}}h$ that minimizes the value of $h$ under the condition $X = \mathop{\mathrm{dom}} f  \sqcup \mathop{\mathrm{dom}}h = \bigsqcup_{n = 0}^\infty f^{-n}(\mathop{\mathrm{dom}}h)$. In addition, if the domain $X$ is finite, then the function $h_f^{\varepsilon\text{-}\ell^p} \colon X \to [-\infty,\infty]$ can be computed recursively. As an application, for a given partial evaluation function on a space equipped with a dynamical system, one can construct a three-parameter filtration associated with its extension, which naturally identifies minimal paths. This clarifies the relationship among three factors: the evaluation of the cost norm, the strength of control, and the resulting value.
\end{abstract}

\maketitle

\section{Introduction}

\subsection{Topological data analysis perspective}

Standard one-parameter persistent homology has developed rapidly in both theory and applications since the turn of the century. 
However, real-world data typically involve multiple parameters, such as scale, density, time, and other external or internal variables. 
Therefore, extending persistent homology from one parameter to multi-parameters has become a natural direction in topological data analysis. 
Although simple barcode representations are no longer available in the same way as in the one-parameter case \cite{carlsson2009theory}, computational and theoretical foundations for two-parameter persistent homology have been developed, including methods for computing minimal presentations and bigraded Betti numbers \cite{lesnick2022computing}. 
Moreover, multi-parameter persistence has begun to be applied to practical data analysis; for example, multi-parameter persistent homology landscapes have been used to identify immune-cell spatial patterns in tumors \cite{vipond2021multiparameter}. 
On the other hand, there remains a need for a framework that naturally constructs a multi-parameter filtration from a single partial evaluation function, thereby enabling the application of multi-parameter persistence to data equipped with both dynamics and scalar-valued evaluations.

\subsection{Optimal transport perspective}
The question of how to minimize costs while achieving better outcomes as in optimal transport problems is widely researched topic from both theoretical and applied perspectives. 
For instance, such researches include the pathfinding problem via combinatorial optimization, the shortest path problem in graph theory, and finding the shortest path on Riemannian manifolds using the calculus of variations.

Recently, a method has been proposed for constructing a ``cost function'' that determines the cost required to return to the initial point \cite{yokoyama2025coarse,yokoyama2025coarse_nonwandering}. 
Furthermore, another method has also been proposed to construct a function that evaluates the cost required to achieve a ``good'' outcome \cite{imoto2026filtration}. 
In numerous optimization problems, the quality of a result is evaluated not only by binary functions, such as success or failure, but also by scalar-valued functions that quantify the degree of desirability of an outcome. 
For example, numerical studies on cloud seeding evaluate weather interventions through scalar-valued quantities such as accumulated precipitation and the localization of heavy rainfall \cite{hiraga2026numerical}. 
Similarly, in aerodynamic shape optimization, scalar-valued performance measures such as the lift-to-drag ratio, lift coefficient, and drag coefficient are optimized to improve airfoil performance \cite{dussauge2023reinforcement}.
On the other hand, since the domain of an evaluation function does not generally span the whole space, a method for effectively extending this evaluation function to the whole space is also required in data analysis. 

\subsection{From the perspective of the extension problem of partial functions}
From a theoretical perspective, the problem of whether a partial function can be extended to a broader domain (e.g. the whole space) while preserving certain structures has been studied across various fields.
For instance, such results include the Hahn-Banach theorem in functional analysis, which extends a linear functional while bounding its norm; the Tietze extension theorem in topology, which provides a continuous extension; the identity theorem in complex analysis, which ensures the uniqueness of a holomorphic extension; and Szpilrajn's extension theorem in order theory, which extends a partial order while preserving existing comparability relations.

\subsection{Description of main results}
We introduce a framework to describe how to minimize costs and achieve better results, even in situations where the quality of the outcome is evaluated by a scalar-valued function. 
Furthermore, we demonstrate that for a finite space, an algorithm can be constructed to find the optimal path for any arbitrary point.
Moreover, for a given partial evaluation function on a space equipped with a dynamical system, we constructed a multi-parameter filtration associated with its extension, which naturally identifies minimal paths.

The present paper consists of four sections.
In the next section, we recall and introduce some concepts of dynamical systems. 
In addition, we state fundamental properties of new concepts and illustrate an example to understand new concepts. 
In \S~3, we introduce a controlled effect function to describe a ``minimizing'' controlled path.
Moreover, we demonstrate the existence of a ``minimizing'' controlled path under perturbations (Theorem~\ref{th:minimizing}). 
In the final section, we construct an algorithm to extend partial evaluation function (Theorem~\ref{th:01} and Theorem~\ref{th:02}) and a multi-parameter filtration associated with the extension (Theorem~\ref{th:03} and Theorem~\ref{th:04}).

\section{Preliminaries}

\begin{definition}\label{def:filtration}
Let $X$ be a set and $\F = \{ F_i \mid i \in I\} \subset 2^X$ a family indexed by a directed set $I$, where $2^X$ is the power set of $X$. The family $\F$ is a {\bf (multi-parameter) filtration} if $X = \bigcup_{i \in I} F_i$ and $F_{i_1} \subseteq F_{i_2}$ for any pair $i_1 \leq i_2 \in I$.
\end{definition}

\begin{definition}\label{def:partial_map}
A map $f \colon X' \to Y$ from a subset $X' \subseteq X$ to a set $Y$ is called a {\bf partial map} from $X$ to $Y$ and denoted by $f \colon X \rightharpoonup Y$.
\end{definition}

\begin{definition}\label{def:positive_orbit}
For any partial map $f \colon X \rightharpoonup X$ and any point $x \in X$, the {\bf non-negative orbit} $O^{\geq 0}_f(x)$ is defined as follows:
\[
O^{\geq 0}_f(x) =
\begin{cases}
\{ x \} & \text{if } x \notin \mathop{\mathrm{dom}}f \\
\{ x, f(x), \ldots , f^n(x) \} & \text{if there is }n \in \Z_{>0} \text{ such that }f^n(x) \notin \mathop{\mathrm{dom}}f \\
& \hspace{6.5pt} \text{ and }  \{ x, f(x), \ldots , f^{n-1}(x) \} \subseteq  \mathop{\mathrm{dom}}f \\
\{ f^n(x) \mid n \in \Z_{\geq 0} \}  & \text{otherwise}
\end{cases}
\]
\end{definition}


\begin{definition}\label{def:cost}
Let $X$ be a set and $c \colon X^2 \to [-\infty,\infty]$ a function.
The function $c$ is a {\bf cost function} if $c(x,x) = 0$ for any $x \in X$. 
\end{definition}

Note that we do not require transitivity in the previous definition to analyze various phenomena. 
Moreover, we allow the cost c to be negative to incorporate scenarios, such as when a decrease in height between two points provides potential energy, which can be viewed as a negative cost.


\begin{definition}\label{def:metric-like}
A cost function $c \colon X^2 \to [-\infty,\infty]$ is {\bf non-degenerate} if $c^{-1}(0) = \{ (x,x) \mid x \in X\}$. 
\end{definition}

\begin{definition}\label{def:metric-like_02}
A cost function $c \colon X^2 \to [-\infty,\infty]$ is {\bf non-negative-valued} if $\mathop{\mathrm{dom}}c \subseteq [0,\infty]$. 
\end{definition}

\subsection{Controlled paths}

Let $f \colon X \rightharpoonup X$ be a partial map on a set $X$ with a cost function $c \colon X^2 \to [-\infty,\infty]$ and a value $p \in [1,\infty]$. 
Recall that the sign function $\mathop{\mathrm{sgn}} \colon \R \to \{-1,0,1\}$ on $\R$ is defined by $\mathop{\mathrm{sgn}}^{-1}(-1) = \R_{<0}$, $\mathop{\mathrm{sgn}}^{-1}(0) = \{ 0 \}$, and $\mathop{\mathrm{sgn}}^{-1}(1) = \R_{>0}$. 

\begin{definition}\label{def:e-p-controlled-path}
For any $\varepsilon \in [-\infty,\infty]$ and any $n \in \Z_{> 0}$, a  sequence $(x;x_0, \ldots , x_{n-1};y)$ 
is an {\bf $\bm{\varepsilon}$-$\bm{\ell^p}$-controlled path} of length $n$ from the initial state $x \in X$ to the terminal state $y \in X$ if $(x_0, \ldots , x_{n-1}) \in (\mathop{\mathrm{dom}}f)^{n}$, $y = f(x_{n-1})$, and 
$\varepsilon \geq \| ( \varepsilon_i )_{i=0}^{n-1} \|_{p}$, where $\varepsilon_0 := c(x, x_{0})$ and $\varepsilon_i := c(f(x_{i-1}), x_{i})$ for any $i \in \{ 1, \ldots , n-1 \},$ and  
\[
\| ( \varepsilon_i )_{i=0}^{n-1} \|_{p} := 
\begin{cases}
\mathop{\mathrm{sgn}}\left( \sum_{i=0}^{n-1} \mathop{\mathrm{sgn}}(\varepsilon_i) \vert \varepsilon_i \vert^p
\right) \vert \sum_{i=0}^{n-1} \mathop{\mathrm{sgn}}(\varepsilon_i) \left| \varepsilon_i \vert^p
\right|^{1/p}
&  (p \in [1,\infty)) \\
\max\{ \varepsilon_0, \varepsilon_1, \ldots , \varepsilon_{n-1} \} &  (p = \infty) 
\end{cases}
\]
as shown in Figure~\ref{fig:chains}. 
\end{definition}

Note that, for any non-negative-valued cost function $c \colon X^2 \to [0,\infty]$ and any sequence $(x_0, \ldots , x_{n-1}) \in X^{n}$,  
\[
\| ( \varepsilon_i )_{i=0}^{n-1} \|_{p} = 
\begin{cases}
\left( \vert \varepsilon_0 \vert^p  + \vert \varepsilon_1 \vert^p + \cdots + \vert \varepsilon_{n-1}\vert^p \right)^{1/p} &  (p \in [1,\infty)) \\
\max\{ \varepsilon_0, \varepsilon_1, \ldots , \varepsilon_{n-1} \} &  (p = \infty) 
\end{cases}
\]
is the $\ell^p$-norm of $( \varepsilon_i )_{i=0}^{n-1}$, where $\varepsilon_0 = c(x, x_{0})$ and $\varepsilon_i = c(f(x_{i-1}), x_{i})$ for any $i \in \{ 0,1, \ldots , n-1 \}$. 
We introduce the following notations.

\begin{definition}
For any $\varepsilon \in [-\infty,\infty]$ and any point $x \in X$, denote by $[x]_f^{\varepsilon\text{-}\ell^p}$ the set of points to which  there is an $\varepsilon\text{-}\ell^p$-controlled path from $x$.
\end{definition}

\begin{definition}
For any $\varepsilon \in [-\infty,\infty]$, any $x \in X$, and any $\varepsilon$-$\ell^p$-controlled path $\gamma = (x;x_0, \ldots , x_{n-1};f(x_{n-1}))$, define the subset $N(\gamma) := \{x, f(x_0), f(x_1), \ldots , f(x_{n_1}) \}$. 
\end{definition}

Notice that, for any $\varepsilon$-$\ell^p$-controlled path $\gamma = (x;x_0, \ldots , x_{n-1};f(x_{n-1}))$,  a point $x'$ is contained in $N(\gamma)$ if and only if either $x' = x$ or the point $x'$ is the terminal state of an $\varepsilon$-$\ell^p$-controlled path $(x;x_0, \ldots , x_{k-1};f(x_{k-1}))$ for some natural number $k \in \{1, \ldots , n\}$.

\subsubsection{Existence of a controlled path into a subset}

We have the following relations.

\begin{definition}
Fix a number $\varepsilon \in [-\infty,\infty]$, a point $x \in X$, and a subset $A \subseteq X$. 
Define the relation $x \overset{\exists}{\underset{f,\varepsilon\text{-}\ell^p, 0}{\rightharpoonup}} A$ as follows:
\[
x \overset{\exists}{\underset{f,\varepsilon\text{-}\ell^p, 0}{\rightharpoonup}} A \text{ if } x \in A
\]
Moreover, for any $n \in \Z_{> 0}$, the relation $x \overset{\exists}{\underset{f,\varepsilon\text{-}\ell^p, n}{\rightharpoonup}} A$ holds if there is an $\varepsilon$-$\ell^p$-controlled path of length $n$ from the initial state $x$ to the terminal state in $A$. 
 
Moreover, the relation $x \overset{\exists}{\underset{f,\varepsilon\text{-}\ell^p}{\rightharpoonup}} A$ holds if there is a non-negative number $n \in \Z_{\geq 0}$ such that $x \overset{\exists}{\underset{f,\varepsilon\text{-}\ell^p, n}{\rightharpoonup}} A$. 
\end{definition}

\begin{remark}
For any non-negative valued cost function, any $\varepsilon$-$\ell^\infty$-controlled path corresponds with the $\varepsilon$-controlled path in \cite{imoto2026filtration}, and any $\varepsilon$-$\ell^1$-controlled path corresponds with the $\varepsilon_{\Sigma}$-controlled path in \cite{imoto2026filtration}.
\end{remark}

\begin{figure}[t]
\begin{center}
\includegraphics[scale=0.75]{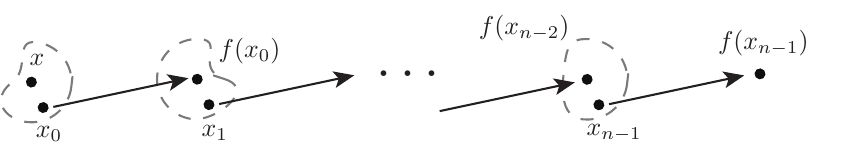}
\end{center} 
\caption{An $\varepsilon$-$\ell^p$-controlled path of length $n$ from the initial state $x$ to the terminal state $f(x_{n-1})$.}
\label{fig:chains}
\end{figure}

\begin{figure*}[t]
\begin{center}
\makebox[\textwidth][c]{%
\includegraphics[width=1.\textwidth]{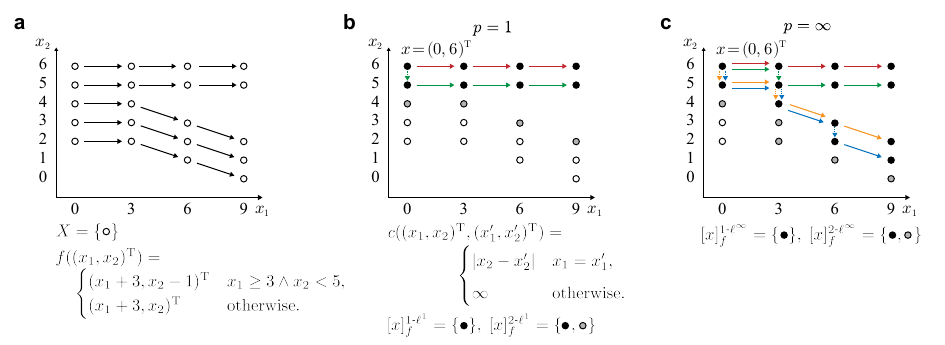}
}
\end{center} 
\caption{
Schematic illustration of controlled paths.
\textbf{a}, A setting of the dynamical system $f$ and the set $X$, where $X$ consists of the open circles. Black arrows represent the map $f$. The dynamics change uniformly on the left side of the state space and then branch into two trajectories after an intermediate region.
\textbf{b}, \textbf{c}, Examples of controlled paths with $\varepsilon=1$ for (b) $p=1$ and (c) $p=\infty$. 
Colored arrows show admissible $\varepsilon$-$\ell^p$-controlled paths from $x=(0,6)^{\rm T}$, and dashed arrows represent transitions induced by $\varepsilon$-control.
The choice of $p$ changes the set of allowable perturbations and therefore the reachable branches of the dynamics; the $\ell^\infty$ constraint permits additional connections that are not available under the $\ell^1$ constraint. Black dots indicate the element of $[x]_f^{\varepsilon\text{-}\ell^p}$.
}
\label{fig:exmple_path}
\end{figure*} 

By definitions of $[x]_f^{\varepsilon\text{-}\ell^p}$ and $x \overset{\exists}{\underset{f,\varepsilon\text{-}\ell^p}{\rightharpoonup}} A$, we have the following observation.

\begin{lemma}
The following statements are equivalent for any $\varepsilon \in [-\infty,\infty]$, any point $x \in X$, and any subset $A \subseteq X$:
\begin{enumerate}[\rm (1)]
\item $x \overset{\exists}{\underset{f,\varepsilon\text{-}\ell^p}{\rightharpoonup}} A$
\item $[x]_f^{\varepsilon\text{-}\ell^p} \cap A \neq \emptyset$. 
\end{enumerate}
\end{lemma}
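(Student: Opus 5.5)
The plan is to unfold both definitions and match the length-$n$ relations $x \overset{\exists}{\underset{f,\varepsilon\text{-}\ell^p, n}{\rightharpoonup}} A$ with membership in $[x]_f^{\varepsilon\text{-}\ell^p}$. I would handle the case $n=0$ separately from the cases $n \geq 1$.

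For (1) $\Rightarrow$ (2), pick $n \in \Z_{\geq 0}$ with $x \overset{\exists}{\underset{f,\varepsilon\text{-}\ell^p, n}{\rightharpoonup}} A$. If $n \geq 1$, there is an $\varepsilon$-$\ell^p$-controlled path $(x;x_0,\ldots,x_{n-1};y)$ with $y = f(x_{n-1}) \in A$. By the definition of $[x]_f^{\varepsilon\text{-}\ell^p}$ we get $y \in [x]_f^{\varepsilon\text{-}\ell^p}$, so $y \in [x]_f^{\varepsilon\text{-}\ell^p} \cap A$. If $n = 0$, then $x \in A$, and we need $x \in [x]_f^{\varepsilon\text{-}\ell^p}$.

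For (2) $\Rightarrow$ (1), take $y \in [x]_f^{\varepsilon\text{-}\ell^p} \cap A$. If $y$ is the terminal state of an $\varepsilon$-$\ell^p$-controlled path of some length $n \geq 1$ from $x$, then that path witnesses $x \overset{\exists}{\underset{f,\varepsilon\text{-}\ell^p, n}{\rightharpoonup}} A$. If instead $y = x$ is in $[x]_f^{\varepsilon\text{-}\ell^p}$ only via the trivial convention, then $x \in A$, which is the $n=0$ relation. In either case (1) holds.

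The only real obstacle is the $n=0$ case, which depends on whether $x \in [x]_f^{\varepsilon\text{-}\ell^p}$. The definition speaks only of controlled paths, which have length $n \in \Z_{>0}$. Without a convention, a point $x \in A$ from which no controlled path returns to $x$ would satisfy (1) but perhaps not (2). I would resolve this by adopting the convention implicit in the paper: the trivial path of length $0$ counts as a controlled path, so $x \in [x]_f^{\varepsilon\text{-}\ell^p}$. This matches the definition of $N(\gamma)$ and the remark after it, which always place the initial point $x$ among the states reached. I would state this convention explicitly at the start of the proof, after which both directions are immediate from the definitions.
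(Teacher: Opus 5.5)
Your proposal is correct and takes the same approach as the paper, which states this lemma without proof as an immediate consequence of the definitions. Your explicit treatment of the $n=0$ case is the one point the paper leaves implicit: it tacitly assumes $x \in [x]_f^{\varepsilon\text{-}\ell^p}$ (for example in Lemma~\ref{reduction_orbit} and Lemma~\ref{lem:exit}). You should impose this convention for every $\varepsilon \in [-\infty,\infty]$ directly, rather than derive it from the norm of an empty sequence; for $p<\infty$ that norm evaluates to $0$, so the trivial path would not be admissible when $\varepsilon<0$, whereas the $n=0$ relation does not depend on $\varepsilon$.
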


\subsection{Reachable values and reachable points}

For sets $B, C$, the symbol $C - B$ is used instead of the set difference $C \setminus B$ when $B \subseteq C$.

From now on, in this section, let $f \colon X \rightharpoonup X$ be a partial map on a set $X$ with a cost function $c \colon X^2 \to [-\infty,\infty]$, $p \in [1,\infty]$ a value, and $h \colon X \rightharpoonup [-\infty, \infty]$ a partial map. 
In this paper, note that $h$ can be considered as an evaluation partial function.

\subsubsection{$(c,\varepsilon)$-fixed points}

We introduce the following concepts to express the property that points outside the domain of the mapping remain stationary even under perturbation. 

\begin{definition}
For any $\varepsilon \in [-\infty,\infty)$, a point $x \in X$
is a {\bf $\bm{(c,\varepsilon)}$-fixed point} if $c(x,\cdot)^{-1}([-\infty,\varepsilon]) \setminus \{ x \} = \emptyset$. 
\end{definition}

Denote by $E_{c}(\varepsilon)$ the set of $(c,\varepsilon)$-fixed points. 

\begin{definition}
A point $x \in X $
is a {\bf $\bm{c}$-fixed point} if $c(x,\cdot)^{-1}(\infty) = X - \{ x \}$. 
\end{definition}

Denote by $E_{c}$ the set of $c$-fixed points. 
We have the following statement. 

\begin{lemma}\label{lem:002}
The following statements are equivalent for any point $x \in X$ with $c(x,x) < \infty$: 
\begin{enumerate}[\rm (1)]
\item The point $x$ is a $c$-fixed point. 
\item The point $x$ is a $(c,\varepsilon)$-fixed point for any $\varepsilon \in [-\infty,\infty)$. 
\end{enumerate}
\end{lemma}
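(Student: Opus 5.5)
The plan is to unwind both definitions and compare them pointwise on $X - \{x\}$. Definitionally, (1) says that $c(x,y) = \infty$ for every $y \neq x$. By contrast, (2) says that for every $\varepsilon \in [-\infty,\infty)$ and every $y \neq x$ we have $c(x,y) \notin [-\infty,\varepsilon]$, that is, $c(x,y) > \varepsilon$. So the whole argument reduces to the elementary fact that a value $t \in [-\infty,\infty]$ equals $\infty$ if and only if $t > \varepsilon$ for every $\varepsilon \in [-\infty,\infty)$.

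For (1) $\Rightarrow$ (2), fix $\varepsilon < \infty$ and $y \neq x$. Then $c(x,y) = \infty > \varepsilon$, so $y \notin c(x,\cdot)^{-1}([-\infty,\varepsilon])$. This gives $c(x,\cdot)^{-1}([-\infty,\varepsilon]) \setminus \{x\} = \emptyset$.

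For (2) $\Rightarrow$ (1), argue by contraposition. Suppose some $y \neq x$ has $c(x,y) = t < \infty$. Take $\varepsilon := t \in [-\infty,\infty)$; this also covers $t = -\infty$, where $\varepsilon = -\infty$ is allowed. Then $y \in c(x,\cdot)^{-1}([-\infty,\varepsilon]) \setminus \{x\}$, so $x$ is not a $(c,\varepsilon)$-fixed point. Hence $c(x,\cdot)^{-1}(\infty) \supseteq X - \{x\}$.

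It remains to show the reverse inclusion, namely $x \notin c(x,\cdot)^{-1}(\infty)$, so that the preimage is exactly $X - \{x\}$. This is precisely where the hypothesis $c(x,x) < \infty$ is used (it is automatic anyway, since $c(x,x) = 0$ for a cost function). The same hypothesis also makes (1) consistent, since without it the preimage could contain $x$. I expect no real obstacle here. The only care needed is in handling the extended-real endpoints $\varepsilon = -\infty$ and $t = -\infty$ correctly, and in checking that the diagonal point $x$ is treated consistently in both definitions.
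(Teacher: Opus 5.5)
Your proof is correct and follows essentially the same route as the paper. Direction (1)$\Rightarrow$(2) is identical. For (2)$\Rightarrow$(1), the paper takes the union $\bigcup_{\varepsilon\in[-\infty,\infty)} c(x,\cdot)^{-1}([-\infty,\varepsilon]) = c(x,\cdot)^{-1}([-\infty,\infty))$, whereas you pick $\varepsilon = c(x,y)$ pointwise; in both arguments the hypothesis $c(x,x)<\infty$ serves only to exclude $x$ from $c(x,\cdot)^{-1}(\infty)$.
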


\begin{proof}
Fix any point $x \in X$ with $c(x,x) < \infty$. 
Suppose that $x$ is a $c$-fixed point. 
For any $\varepsilon \in [-\infty,\infty)$, 
we have that  $c(x,\cdot)^{-1}([-\infty,\varepsilon]) \subseteq \{ x \}$ and so that $c(x,\cdot)^{-1}([-\infty,\varepsilon]) \setminus \{ x \} = \emptyset$. 
Conversely, suppose that $x$ is a $(c,\varepsilon)$-fixed point for any $\varepsilon \in [-\infty,\infty)$. 
By $c(x,x) < \infty$, we have that $c(x,\cdot)^{-1}([-\infty,\infty)) - \{ x \} = \bigcup_{\varepsilon \in [-\infty,\infty)}c(x,\cdot)^{-1}([-\infty,\varepsilon]) \setminus \{ x \} = \emptyset$. 
This means that $c(x,\cdot)^{-1}(\infty) = X - \{ x \}$.
\end{proof}

The previous lemma implies the following statement. 

\begin{corollary}
Suppose that the cost function $c$ is non-degenerate. 
Then the following statements are equivalent for any point $x \in X$: 
\begin{enumerate}[\rm (1)]
\item The point $x$ is a $c$-fixed point. 
\item The point $x$ is a $(c,\varepsilon)$-fixed point for any $\varepsilon \in [-\infty,\infty)$. 
\end{enumerate}
\end{corollary}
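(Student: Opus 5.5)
The corollary follows directly from Lemma~\ref{lem:002}: the only work is to check that non-degeneracy of $c$ supplies the hypothesis $c(x,x) < \infty$ for every point $x \in X$. Once that is in place, the equivalence of (1) and (2) is exactly the conclusion of Lemma~\ref{lem:002} applied pointwise.

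Concretely, fix any $x \in X$. Since $c$ is a cost function (Definition~\ref{def:cost}), we have $c(x,x) = 0$, and in particular $c(x,x) < \infty$. So the hypothesis of Lemma~\ref{lem:002} holds at every point of $X$. Applying the lemma at $x$ gives that $x$ is a $c$-fixed point if and only if $x$ is a $(c,\varepsilon)$-fixed point for every $\varepsilon \in [-\infty,\infty)$. Since $x$ was arbitrary, the corollary follows.

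I would add a remark that non-degeneracy is actually stronger than needed here. The condition $c(x,x) = 0$ already holds for every cost function by definition. What non-degeneracy adds, namely $c^{-1}(0) = \{(x,x) \mid x \in X\}$, still yields $(x,x) \in c^{-1}(0)$ and hence $c(x,x) = 0 < \infty$, so invoking it is harmless. There is no genuine obstacle. The only point to state explicitly is that the finiteness hypothesis in Lemma~\ref{lem:002} is automatically satisfied, so the lemma applies to every point of $X$ rather than only to those with $c(x,x) < \infty$.
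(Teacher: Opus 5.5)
Your proof is correct and follows the paper's route: the paper derives the corollary directly from Lemma~\ref{lem:002}, whose hypothesis $c(x,x)<\infty$ holds at every point because $c(x,x)=0$. Your remark that non-degeneracy is superfluous is also accurate, since $c(x,x)=0$ is already required of every cost function in Definition~\ref{def:cost}.
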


Notice that $E_c = \bigcap_{\varepsilon \in [-\infty,\infty)} E_{c}(\varepsilon)$ when $c$ is non-degenerate.

\subsubsection{Reachable ranges, reachable domains, and sublevel controllability ranges}

For any $\varepsilon \in [-\infty,\infty]$ and any $x \in X$, define a subset $R_{h,f}^{\ell^p}(\varepsilon,x) \subseteq [-\infty, \infty]$, called {\bf $\bm{(\varepsilon,h)}$-reachable range from $\bm{x}$}, as follows: 
\[
R_{h,f}^{\ell^p}(\varepsilon,x) := \left\{ r \in [-\infty, \infty] \middle| x \overset{\exists}{\underset{f,\varepsilon\text{-}\ell^p}{\rightharpoonup}} h^{-1}(r) \right\} = h\left([x]^{\varepsilon\text{-}\ell^p}_f \cap \mathop{\mathrm{dom}}h\right) 
\]

For any $\varepsilon \in [-\infty,\infty]$ and any $r \in [-\infty, \infty]$, define a subset $D_{h,f}^{\ell^p}(\varepsilon,r) \subseteq X$, called {\bf $\bm{(\varepsilon,h)}$-reachable domain to $\bm{r}$}, as follows: 
\[
D_{h,f}^{\ell^p}(\varepsilon,r) := \left\{ x \in X \middle| x \overset{\exists}{\underset{f,\varepsilon\text{-}\ell^p}{\rightharpoonup}} h^{-1}(r) \right\}
\]
Then $D_{h,f}^{\ell^p}(\varepsilon) := \bigcup_{r \in [-\infty, \infty]} D_{h,f}^{\ell^p}(\varepsilon,r)$ is called {\bf $\bm{(\varepsilon,h)}$-reachable domain}.
Notice that $D_{h,f}^{\ell^p}(\varepsilon)$ is the set of points from which the domain of $h$ is reachable via an $\varepsilon$-$\ell^p$-controlled path.

For any $x \in X$ and any $r \in [-\infty, \infty]$, define a subset $C_{h,f}^{\ell^p}(x,r) \subseteq [-\infty, \infty]$, called {\bf $\bm{h}$-sublevel controllability range}, and $\underline{C}_{h,f}^{\ell^p} \colon X \times [-\infty, \infty] \to [0,\infty]$ as follows: 
\[
C_{h,f}^{\ell^p}(x,r) := \left\{ \varepsilon \in [-\infty,\infty] \middle| x \overset{\exists}{\underset{f,\varepsilon\text{-}\ell^p}{\rightharpoonup}} h^{-1}([-\infty,r]) \right\}
\]
\[
\underline{C}_{h,f}^{\ell^p}(x,r) :=
\begin{cases}
\infty & \text{ if } C_{h,f}^{\ell^p}(x,r) = \emptyset\\
\inf C_{h,f}^{\ell^p}(x,r) & \text{ otherwise}
\end{cases}
\]
Here, we consider that ``$\inf \emptyset = \infty$''. 
Therefore, put $\inf \emptyset := \infty$. 
Then $\underline{C}_{h,f}^{\ell^p}(x,r) =\inf C_{h,f}^{\ell^p}(x,r)$. 
Examples of these notions are illustrated in Fig.~\ref{fig:exmple_RDC}.
We have the following equivalence to the non-existence of $\varepsilon$-$\ell^p$-controlled paths from a certain initial point to the domain of $h$. 

\begin{figure*}[t]
\begin{center}
\makebox[\textwidth][c]{%
\includegraphics[width=1.\textwidth]{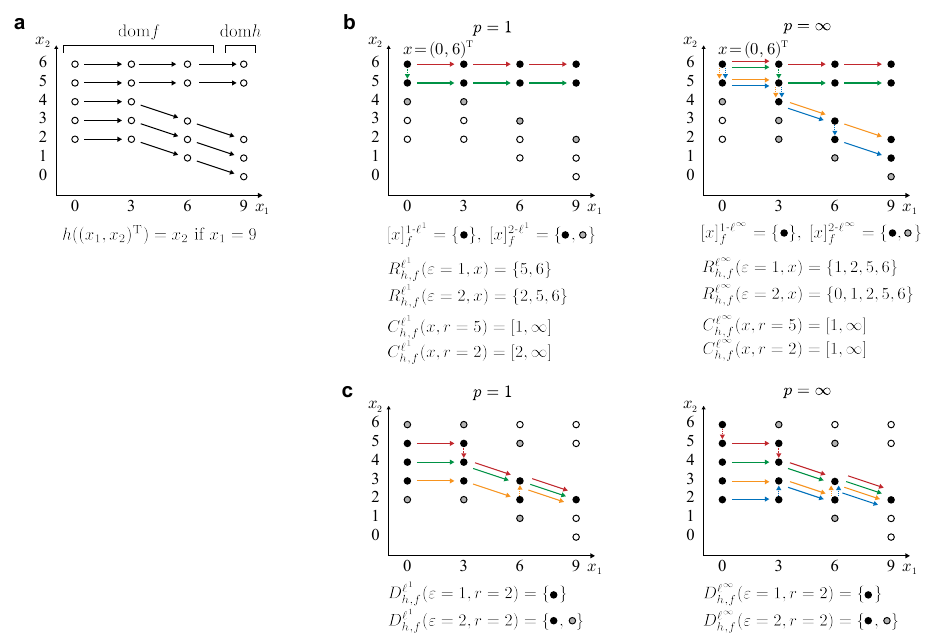}
}
\end{center} 
\caption{
Illustration of reachable sets and controllability ranges indexed by the attracting level.
\textbf{a}, A schematic setting of a dynamical system $f$ and a function $h$ on the two-dimensional state space. 
The setting of the set $X$ and the dynamical system $f$ follows that in Fig.~\ref{fig:exmple_path}. 
The function $h$ is defined on the states with $x_1=9$, and its value depends on the vertical coordinate.
\textbf{b}, Examples of $(\varepsilon, r)$-reachable ranges $R^{\ell^p}_{h, f}$ and the $h$-sublevel controllability range $C^{\ell^p}_{h, f}$ for $\varepsilon=1$ and $\varepsilon=2$. 
Colored paths represent admissible controlled trajectories from the initial state $x$ to the $h$-sublevel set $h^{-1}([-\infty,r])$. 
The left and right panels show the cases $p=1$ and $p=\infty$, respectively. 
The choice of $p$ changes the set of admissible perturbations and therefore changes the reachable branches and the controllability range.
\textbf{c}, Examples of the $(\varepsilon, h)$-reachable domain $D_{h,f}^{\ell^p}$ for $p=1$ and $p=\infty$. 
The colored paths indicate states that can reach the $h$-sublevel set under $\varepsilon$-$\ell^p$ control. 
Comparing the two panels shows how the norm used to measure the control affects the states that are regarded as controllable at each attracting level.
}
\label{fig:exmple_RDC}
\end{figure*} 

\begin{lemma}\label{lem:V_eq}
The following statements are equivalent for any $x \in X$ and any $\varepsilon \in [-\infty,\infty]$: 
\begin{enumerate}[\rm (1)]
\item $x \overset{\exists}{\underset{f,\varepsilon\text{-}\ell^p}{\not\rightharpoonup}} \mathop{\mathrm{dom}}h$. 
\item $[x]_f^{\varepsilon\text{-}\ell^p} \cap \mathop{\mathrm{dom}}h = \emptyset$.
\item $R_{h,f}^{\ell^p}(\varepsilon,x) = \emptyset$. 
\item $\varepsilon \notin C_{h,f}^{\ell^p}(x,\infty)$.
\item $x \notin D_{h,f}^{\ell^p}(\varepsilon)$. 
\end{enumerate}
\end{lemma}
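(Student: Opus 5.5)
The plan is to prove the equivalences by a chain of elementary definitional identifications. Each of (1)–(5) is the negation of the single assertion ``$[x]_f^{\varepsilon\text{-}\ell^p}$ meets $\mathop{\mathrm{dom}}h$''. The preceding lemma (equivalence of $x \overset{\exists}{\underset{f,\varepsilon\text{-}\ell^p}{\rightharpoonup}} A$ with $[x]_f^{\varepsilon\text{-}\ell^p} \cap A \neq \emptyset$) will be the main tool. I will prove (1)$\Leftrightarrow$(2), (2)$\Leftrightarrow$(3), (1)$\Leftrightarrow$(4), and (1)$\Leftrightarrow$(5).

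First, (1)$\Leftrightarrow$(2) is the contrapositive of that lemma applied with $A = \mathop{\mathrm{dom}}h$. Next, (2)$\Leftrightarrow$(3) follows from the identity $R_{h,f}^{\ell^p}(\varepsilon,x) = h([x]^{\varepsilon\text{-}\ell^p}_f \cap \mathop{\mathrm{dom}}h)$ given in the definition, because the image of a set under a map is empty exactly when the set is empty.

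For (1)$\Leftrightarrow$(4), I unwind the definition: $\varepsilon \in C_{h,f}^{\ell^p}(x,\infty)$ means $x \overset{\exists}{\underset{f,\varepsilon\text{-}\ell^p}{\rightharpoonup}} h^{-1}([-\infty,\infty])$. Since $h$ takes values in $[-\infty,\infty]$, we have $h^{-1}([-\infty,\infty]) = \mathop{\mathrm{dom}}h$, so negating gives (1).

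For (1)$\Leftrightarrow$(5), I use $D_{h,f}^{\ell^p}(\varepsilon) = \bigcup_r D_{h,f}^{\ell^p}(\varepsilon,r)$. Then $x \in D_{h,f}^{\ell^p}(\varepsilon)$ holds iff there is some $r$ with $x \overset{\exists}{\underset{f,\varepsilon\text{-}\ell^p}{\rightharpoonup}} h^{-1}(r)$. By the lemma, this holds iff $[x]_f^{\varepsilon\text{-}\ell^p} \cap h^{-1}(r) \neq \emptyset$ for some $r$. Since $\mathop{\mathrm{dom}}h = \bigsqcup_r h^{-1}(r)$, that is equivalent to $[x]_f^{\varepsilon\text{-}\ell^p} \cap \mathop{\mathrm{dom}}h \neq \emptyset$, i.e.\ to the negation of (2).

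The only point requiring care is the length-$0$ convention. The relation $\overset{\exists}{\rightharpoonup}$ includes the case $n=0$, $x \in A$, whereas $[x]_f^{\varepsilon\text{-}\ell^p}$ was defined through controlled paths, which have positive length. I will read $[x]_f^{\varepsilon\text{-}\ell^p}$ consistently with the preceding lemma (so that it contains $x$), or simply invoke that lemma as stated. Then every step above is purely set-theoretic, and no property of $c$, $p$, or $\varepsilon$ is used.
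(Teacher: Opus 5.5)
Your proposal is correct and takes essentially the same approach as the paper, which also obtains each equivalence by unwinding the definitions of $[x]_f^{\varepsilon\text{-}\ell^p}$, $R_{h,f}^{\ell^p}(\varepsilon,x)$, $C_{h,f}^{\ell^p}(x,\infty)$ (using $h^{-1}([-\infty,\infty]) = \mathop{\mathrm{dom}}h$) and $D_{h,f}^{\ell^p}(\varepsilon)$; the only difference is cosmetic, since you link (3) to (2) where the paper links every statement to (1). Your note on the length-$0$ case makes explicit a convention the paper uses tacitly, namely that $x$ is counted as a member of $[x]_f^{\varepsilon\text{-}\ell^p}$.
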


\begin{proof}
By definitions of $[x]_f^{\varepsilon\text{-}\ell^p}$, assertions (1) and (2) are equivalent. 
From definitions of $R_{h,f}^{\ell^p}(\varepsilon,x)$, assertions (1) and (3) are equivalent. 
By definitions of $C_{h,f}^{\ell^p}(x,\infty)$, assertions (1) and (4) are equivalent. 
From definitions of $D_{h,f}^{\ell^p}(\varepsilon)$, assertions (1) and (5) are equivalent. 
\end{proof}

By the monotonicity of $\ell^p$-norms, the previous definitions imply the following statement. 

\begin{lemma}\label{lem:inclusion_V}
The following statement holds for any point $x \in X$, any $p \leq p' \in [1,\infty]$, any $\varepsilon \leq \varepsilon' \in [0,\infty]$, and any $r \leq r' \in [-\infty,\infty]$: 
\[
\begin{split}
R_{h,f}^{\ell^p}(\varepsilon,x) &\subseteq R_{h,f}^{\ell^p}(\varepsilon',x) \cap R_{h,f}^{\ell^{p'}}(\varepsilon,x)
\\
D_{h,f}^{\ell^p}(\varepsilon,r) &\subseteq D_{h,f}^{\ell^p}(\varepsilon',r) \cap D_{h,f}^{\ell^{p'}}(\varepsilon,r)
\\
C_{h,f}^{\ell^p}(x,r) &\subseteq C_{h,f}^{\ell^p}(x,r') \cap C_{h,f}^{\ell^{p'}}(x,r)
\\
\underline{C}_{h,f}^{\ell^p}(x,r) & \hspace{1pt} \geq \max \left\{ \underline{C}_{h,f}^{\ell^p}(x,r'), \underline{C}_{h,f}^{\ell^{p'}}(x,r) \right\}
\end{split}
\]
\end{lemma}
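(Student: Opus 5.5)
The plan is to reduce all four assertions to two elementary facts about $\varepsilon$-$\ell^p$-controlled paths, and then read each inclusion off the definitions of $R_{h,f}^{\ell^p}$, $D_{h,f}^{\ell^p}$ and $C_{h,f}^{\ell^p}$. The first fact is \emph{monotonicity in $\varepsilon$}. Let $\gamma=(x;x_0,\ldots,x_{n-1};y)$ be an $\varepsilon$-$\ell^p$-controlled path and let $\varepsilon\le\varepsilon'$. Then $\varepsilon'\ge\varepsilon\ge\|(\varepsilon_i)_{i=0}^{n-1}\|_p$, so $\gamma$ is also an $\varepsilon'$-$\ell^p$-controlled path. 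The relation $x \overset{\exists}{\underset{f,\varepsilon\text{-}\ell^p,0}{\rightharpoonup}} A$ does not depend on $\varepsilon$ at all. Together these give $[x]_f^{\varepsilon\text{-}\ell^p}\subseteq[x]_f^{\varepsilon'\text{-}\ell^p}$, and $x \overset{\exists}{\underset{f,\varepsilon\text{-}\ell^p}{\rightharpoonup}} A$ implies $x \overset{\exists}{\underset{f,\varepsilon'\text{-}\ell^p}{\rightharpoonup}} A$.

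The second fact is \emph{monotonicity in $p$}. For $p\le p'$ I want $\|(\varepsilon_i)\|_{p'}\le\|(\varepsilon_i)\|_p$, so that every $\varepsilon$-$\ell^p$-controlled path is also an $\varepsilon$-$\ell^{p'}$-controlled path. When all $\varepsilon_i\ge 0$ this is the classical inequality $\|v\|_{p'}\le\|v\|_p$ for $1\le p\le p'\le\infty$. I would prove it by normalizing $\|v\|_p=1$, noting that then $|v_i|\le 1$, hence $|v_i|^{p'}\le|v_i|^p$, and summing. The case $p'=\infty$ is immediate from $\max_i|v_i|\le\|v\|_p$. Entries equal to $\infty$ make both sides $\infty$ and can be handled separately.

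I expect this second step to be the main obstacle. Because the cost may be negative, the signed quantity in Definition~\ref{def:e-p-controlled-path} is not monotone in $p$ in general. For instance, the entries $(1,-1)$ give $0$ for $p=1$ but $1$ for $p=\infty$. I would therefore first check whether the intended reading is that $c$ is non-negative-valued (Definition~\ref{def:metric-like_02}), which is consistent with the hypothesis $\varepsilon,\varepsilon'\in[0,\infty]$ and with the phrase ``monotonicity of $\ell^p$-norms''. I would state the $p$-part of the lemma under that assumption. The $\varepsilon$- and $r$-parts need no such assumption.

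With these facts the inclusions follow directly. For $R$: $h([x]^{\varepsilon\text{-}\ell^p}_f\cap\mathop{\mathrm{dom}}h)$ is monotone in the set $[x]^{\varepsilon\text{-}\ell^p}_f$, which grows with $\varepsilon$ and with $p$. For $D$: a point reaching $h^{-1}(r)$ with parameters $(\varepsilon,p)$ also reaches it with $(\varepsilon',p)$ and with $(\varepsilon,p')$. For $C$: if $r\le r'$ then $h^{-1}([-\infty,r])\subseteq h^{-1}([-\infty,r'])$, so reaching the smaller set implies reaching the larger one, and passing from $p$ to $p'$ is again the second fact. Finally, since $C_{h,f}^{\ell^p}(x,r)$ is contained in both $C_{h,f}^{\ell^p}(x,r')$ and $C_{h,f}^{\ell^{p'}}(x,r)$, taking infima (with $\inf\emptyset=\infty$) reverses the inclusions. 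This yields the inequality for $\underline{C}_{h,f}^{\ell^p}$, since the infimum over a subset is at least the infimum over each superset.
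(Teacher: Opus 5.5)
Your proposal is correct and takes the same route as the paper. The paper's entire justification is the remark that the inclusions follow from the definitions together with monotonicity of $\ell^p$-norms; your treatment of the $\varepsilon$- and $r$-monotonicity, the norm inequality, and the infima (with $\inf\emptyset=\infty$) fills in that remark. Your caveat about the $p$-direction is also genuinely needed, not merely cautious. Take $c$ equal to $\infty$ off the diagonal except along a single two-step path from $x\notin\mathop{\mathrm{dom}}h$ into $\mathop{\mathrm{dom}}h$ with increments $(1,-1)$. Then $R_{h,f}^{\ell^1}(0,x)\neq\emptyset=R_{h,f}^{\ell^\infty}(0,x)$ and $\underline{C}_{h,f}^{\ell^1}(x,\infty)=0<1=\underline{C}_{h,f}^{\ell^\infty}(x,\infty)$. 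So the $p$-parts of the lemma hold only under your added hypothesis that $c$ is non-negative-valued.
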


\section{A mapping extended from a partial map with respect to a dynamical system}

We introduce the following effect function. 



\begin{definition}
For any $\varepsilon \in [0,\infty]$, the extension $h_f^{\varepsilon\text{-}\ell^p} \colon X \rightharpoonup [-\infty, \infty]$ of $h$, called {\bf $\bm{\varepsilon}$-control effect function}, is defined for any point $x \in X$ as follows: 
\[
h_f^{\varepsilon\text{-}\ell^p} (x) := 
\begin{cases}
\infty & \text{if } x \overset{\exists}{\underset{f,\varepsilon\text{-}\ell^p}{\not\rightharpoonup}} \mathop{\mathrm{dom}}h \hspace{10pt} \left(\mathrm{i.e.} \hspace{5pt} R_{h,f}^{\ell^p}(\varepsilon,x) = \emptyset \right)\\
\inf R_{h,f}^{\ell^p}(\varepsilon,x) 
& \text{otherwise}
\end{cases}
\] 
\end{definition}

Since $\inf \emptyset = \infty$, we have that $h_f^{\varepsilon\text{-}\ell^p} (x) = \inf R_{h,f}^{\ell^p}(\varepsilon,x) = \inf_{} h\left([x]^{\varepsilon\text{-}\ell^p}_f \cap \mathop{\mathrm{dom}}h\right)$, because of definition of $R_{h,f}^{\ell^p}(\varepsilon,x)$. 
Roughly speaking, we can consider a small value of $h_f^{\varepsilon\text{-}\ell^p}$ to be a ``good'' cost, and a large value of $h_f^{\varepsilon\text{-}\ell^p}$ to be an ``expensive'' cost. 

For any $\varepsilon \in [0,\infty]$, we define the {\bf effect function} $\bm{h_f^{\Delta \varepsilon\text{-}\ell^p}(x)} := h_f^{\varepsilon\text{-}\ell^p}(x)- h_f^{0\text{-}\ell^p}(x) \leq 0$ {\bf of the difference}.  
Here, we adopt the convention that $\infty - \infty := 0$ and $r - \infty := -\infty$ for any $r \in [0,\infty]$.  

\begin{example}
In the setting of Fig.~\ref{fig:exmple_path}, the following hold: for $x=(0,6)^{\rm T}$, 
\begin{enumerate}[\rm (1)]
\item 
$
h_f^{0\text{-}\ell^1}(x)=6,\quad 
h_f^{1\text{-}\ell^1}(x)=5,\quad 
h_f^{2\text{-}\ell^1}(x)=2.
$
\item 
$
h_f^{0\text{-}\ell^\infty}(x)=6,\quad 
h_f^{1\text{-}\ell^\infty}(x)=1,\quad 
h_f^{2\text{-}\ell^\infty}(x)=0.
$
\item 
$
h_f^{\Delta 0\text{-}\ell^1}(x)=0,\quad 
h_f^{\Delta 1\text{-}\ell^1}(x)=-1,\quad 
h_f^{\Delta 2\text{-}\ell^1}(x)=-4.
$
\item 
$
h_f^{\Delta 0\text{-}\ell^\infty}(x)=0,\quad 
h_f^{\Delta 1\text{-}\ell^\infty}(x)=-5,\quad 
h_f^{\Delta 2\text{-}\ell^\infty}(x)=-6.
$
\end{enumerate}
\end{example}

We have the following observations.

\begin{lemma}\label{reduction_orbit}
Suppose that the cost function $c$ is non-negative-valued and non-degenerate. 
The following properties hold for any $x \in X$: 
\begin{enumerate}[\rm (1)]
\item $[x]_f^{0\text{-}\ell^p} = O^{\geq 0}_f(x)$.
\item If $x \in X - \mathop{\mathrm{dom}}f$, then $[x]_f^{0\text{-}\ell^p} = O^{\geq 0}_f(x) = \{ x \}$.
\end{enumerate}
\end{lemma}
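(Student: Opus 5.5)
The plan is to unwind the definition of a $0$-$\ell^p$-controlled path and show that, under the two hypotheses on $c$, such a path is forced to be a piece of the forward orbit of $x$. Throughout I will read $[x]_f^{0\text{-}\ell^p}$ as containing $x$ itself, via the trivial path of length $0$. This matches the relation $x \overset{\exists}{\underset{f,\varepsilon\text{-}\ell^p, 0}{\rightharpoonup}} A$ for $x\in A$, and it is also what assertion (2) requires. I will note this convention explicitly in the proof.

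\emph{Step 1: a zero budget forces zero costs.} Take a $0$-$\ell^p$-controlled path $(x;x_0,\ldots,x_{n-1};f(x_{n-1}))$ with costs $\varepsilon_0=c(x,x_0)$ and $\varepsilon_i=c(f(x_{i-1}),x_i)$. Since $c$ is non-negative-valued, every $\varepsilon_i\in[0,\infty]$. For $p=\infty$, the condition $\max_i \varepsilon_i \le 0$ gives $\varepsilon_i=0$ for all $i$. For $p<\infty$, the norm reduces to the ordinary $(\sum_i \varepsilon_i^p)^{1/p}$ by the remark after Definition~\ref{def:e-p-controlled-path}. This quantity is $\le 0$ only if each $\varepsilon_i=0$, since any $\varepsilon_i=\infty$ makes the norm $\infty$. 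Non-degeneracy, $c^{-1}(0)=\{(y,y)\}$, then gives $x_0=x$ and $x_i=f(x_{i-1})$. By induction, $x_i=f^i(x)$ for every $i$, and the terminal state is $f^n(x)$. Moreover, $(x_0,\ldots,x_{n-1})\in(\mathop{\mathrm{dom}}f)^n$ says exactly that $x,f(x),\ldots,f^{n-1}(x)\in\mathop{\mathrm{dom}}f$.

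\emph{Step 2: the converse.} If $x,\ldots,f^{n-1}(x)\in\mathop{\mathrm{dom}}f$, choose $x_i:=f^i(x)$. Every cost is then of the form $c(y,y)=0$, so the norm is $0$ and this is a valid $0$-$\ell^p$-controlled path ending at $f^n(x)$. Combining the two steps,
\[
[x]_f^{0\text{-}\ell^p}=\{x\}\cup\{f^n(x)\mid n\ge1,\ x,\ldots,f^{n-1}(x)\in\mathop{\mathrm{dom}}f\}.
\]
It remains to check that this set coincides with $O^{\geq 0}_f(x)$ in each of the three cases of Definition~\ref{def:positive_orbit}: the case $x\notin\mathop{\mathrm{dom}}f$, the case where the orbit first leaves $\mathop{\mathrm{dom}}f$ at time $n$, and the case of an infinite orbit. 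This gives (1).

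\emph{Step 3: assertion (2).} This follows directly from (1): if $x\notin\mathop{\mathrm{dom}}f$, no path of positive length can start, because $x_0=x$ would have to lie in $\mathop{\mathrm{dom}}f$. Hence $[x]_f^{0\text{-}\ell^p}=\{x\}$, which is the first case of the orbit definition.

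The main obstacle is not technical. It is the convention about whether $x\in[x]_f^{0\text{-}\ell^p}$, together with the $\infty$ values in the $\ell^p$ formula, both of which must be stated carefully. The rest is routine bookkeeping.
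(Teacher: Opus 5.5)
Your argument is correct and is the unwinding of definitions that the paper leaves implicit; the paper states this lemma as an observation without proof. A zero budget with non-negative costs forces every step cost to vanish, non-degeneracy then forces $x_i = f^i(x)$, and the resulting set matches the three cases of Definition~\ref{def:positive_orbit}. Your explicit convention that $x \in [x]_f^{0\text{-}\ell^p}$ via the length-$0$ path is genuinely needed: under the literal definition, which only admits paths of positive length, assertion (2) would yield $\emptyset$ rather than $\{x\}$. The convention also agrees with how the paper itself uses $[x]_f^{\varepsilon\text{-}\ell^p}$, e.g.\ in the proof of Lemma~\ref{lem:exit}.
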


\begin{lemma}\label{lem:correspond_extension_h}
If the cost function $c$ is non-negative-valued and non-degenerate and $\mathop{\mathrm{dom}}h \cap \mathop{\mathrm{dom}}f = \emptyset$, then $h_f^{0\text{-}\ell^p}\vert_{\mathop{\mathrm{dom}}h} = h$.
\end{lemma}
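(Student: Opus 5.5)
The plan is to reduce the statement to Lemma~\ref{reduction_orbit}: with $\varepsilon = 0$, the set of points reachable from $x$ is just the non-negative orbit of $x$, and for $x \in \mathop{\mathrm{dom}}h$ that orbit is $\{x\}$. Fix $x \in \mathop{\mathrm{dom}}h$. Since $\mathop{\mathrm{dom}}h \cap \mathop{\mathrm{dom}}f = \emptyset$, we have $x \in X - \mathop{\mathrm{dom}}f$. Lemma~\ref{reduction_orbit}(2) then gives $[x]_f^{0\text{-}\ell^p} = O^{\geq 0}_f(x) = \{x\}$. Hence
\[
[x]_f^{0\text{-}\ell^p} \cap \mathop{\mathrm{dom}}h = \{x\},
\qquad
R_{h,f}^{\ell^p}(0,x) = h(\{x\}) = \{h(x)\}.
\]

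This set is nonempty, so $x \overset{\exists}{\underset{f,0\text{-}\ell^p}{\rightharpoonup}} \mathop{\mathrm{dom}}h$ by Lemma~\ref{lem:V_eq}. We are therefore in the second case of the definition of $h_f^{0\text{-}\ell^p}$, and
\[
h_f^{0\text{-}\ell^p}(x) = \inf \{h(x)\} = h(x).
\]
This holds for every $x \in \mathop{\mathrm{dom}}h$, so $h_f^{0\text{-}\ell^p}\vert_{\mathop{\mathrm{dom}}h} = h$.

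The main point to check is that $[x]_f^{0\text{-}\ell^p}$ contains $x$ itself. By definition, controlled paths have length $n \geq 1$, and their terminal states lie in $f(\mathop{\mathrm{dom}}f)$. So $[x]_f^{0\text{-}\ell^p}$ should be read as including the length-$0$ case, in line with the relation $x \overset{\exists}{\underset{f,\varepsilon\text{-}\ell^p,0}{\rightharpoonup}} A$ when $x \in A$, and with Lemma~\ref{reduction_orbit}. With that convention, the only other thing to rule out is that some length-$\geq 1$ path could reach a point of $\mathop{\mathrm{dom}}h$ with a smaller $h$-value.

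To rule this out directly, without relying on the lemma, take a $0$-$\ell^p$-controlled path $(x;x_0,\ldots,x_{n-1};y)$. Since $c$ is non-negative-valued, its $\ell^p$-norm is at least each $\varepsilon_i \ge 0$, so the condition $0 \ge \|(\varepsilon_i)_{i=0}^{n-1}\|_p$ forces every $\varepsilon_i = 0$. For $p<\infty$ this is because a sum of non-negative terms vanishes only if each term does; for $p=\infty$ the maximum is at most $0$. By non-degeneracy, $c(x,x_0) = 0$ gives $x_0 = x$. But $x_0 \in \mathop{\mathrm{dom}}f$ and $x \notin \mathop{\mathrm{dom}}f$, a contradiction. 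So no path of positive length leaves $x$, and $\{x\}$ is the entire reachable set.
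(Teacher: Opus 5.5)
Your proof is correct and follows the paper's argument. Both proofs invoke Lemma~\ref{reduction_orbit}(2) for $x \in \mathop{\mathrm{dom}}h \subseteq X - \mathop{\mathrm{dom}}f$ to get $[x]_f^{0\text{-}\ell^p} = \{x\}$, and hence $R_{h,f}^{\ell^p}(0,x) = \{h(x)\}$ and $h_f^{0\text{-}\ell^p}(x) = h(x)$. Your two additions are also sound: the remark that $[x]_f^{0\text{-}\ell^p}$ must be read as including the length-$0$ case, and the direct check that no positive-length path leaves $x$ (non-negativity forces every $\varepsilon_i = 0$, and non-degeneracy then forces $x_0 = x \notin \mathop{\mathrm{dom}}f$). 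They make explicit what the paper leaves implicit, since it states Lemma~\ref{reduction_orbit} without proof.
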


\begin{proof}
By Lemma~\ref{reduction_orbit}, the negative-valued property and the non-degeneracy of $c$ imply that $[x]_f^{0\text{-}\ell^p} = O^{\geq 0}_f(x) = \{ x \}$ for any $x \in X - \mathop{\mathrm{dom}}f$.
Fix any point $x \in \mathop{\mathrm{dom}}h$. 
By $\mathop{\mathrm{dom}}h \subseteq X - \mathop{\mathrm{dom}}f$, we have that $R_{h,f}^{\ell^p}(0,x) = \{ h(x) \}$ and so that $h_f^{0\text{-}\ell^p}(x) = \inf R_{h,f}^{\ell^p}(0,x) = h(x)$.
\end{proof}

\begin{lemma}\label{lem:non_deg_ext}
If the cost function $c$ is non-negative-valued and non-degenerate and $\mathop{\mathrm{dom}}h \cap \mathop{\mathrm{dom}}f = \emptyset$, then the intersection $[x]_f^{0\text{-}\ell^p} \cap \mathop{\mathrm{dom}}h$ for any $x \in X$ contains at most one element.
\end{lemma}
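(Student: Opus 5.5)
We will use Lemma~\ref{reduction_orbit}~(1), which under the hypotheses that $c$ is non-negative-valued and non-degenerate identifies $[x]_f^{0\text{-}\ell^p}$ with the non-negative orbit $O^{\geq 0}_f(x)$. The statement then reduces to showing that $O^{\geq 0}_f(x) \cap \mathop{\mathrm{dom}}h$ has at most one element whenever $\mathop{\mathrm{dom}}h \cap \mathop{\mathrm{dom}}f = \emptyset$. Equivalently, $\mathop{\mathrm{dom}}h \subseteq X - \mathop{\mathrm{dom}}f$.

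We split according to the three cases of Definition~\ref{def:positive_orbit}.

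\emph{First case: $x \notin \mathop{\mathrm{dom}}f$.} Then $O^{\geq 0}_f(x) = \{x\}$, so the intersection has at most one element trivially.

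\emph{Second case: the orbit is $\{x, f(x), \ldots, f^n(x)\}$.} Here $f^i(x) \in \mathop{\mathrm{dom}}f$ for every $i < n$ and $f^n(x) \notin \mathop{\mathrm{dom}}f$. Since $\mathop{\mathrm{dom}}h$ is disjoint from $\mathop{\mathrm{dom}}f$, none of $x, f(x), \ldots, f^{n-1}(x)$ lies in $\mathop{\mathrm{dom}}h$. Hence the intersection is contained in $\{f^n(x)\}$.

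\emph{Third case: the orbit is infinite, $\{ f^n(x) \mid n \in \Z_{\geq 0} \}$.} Every $f^n(x)$ lies in $\mathop{\mathrm{dom}}f$, because $f^{n+1}(x)$ is defined. So the intersection with $\mathop{\mathrm{dom}}h$ is empty.

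In every case the intersection is contained in the single point of the orbit that lies outside $\mathop{\mathrm{dom}}f$, if such a point exists. This proves the claim.

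The only delicate point is the correct use of Lemma~\ref{reduction_orbit}, namely that zero-cost controlled paths cannot jump. With a non-degenerate, non-negative-valued cost, any path satisfying $\|(\varepsilon_i)\|_p \le 0$ forces every $\varepsilon_i = 0$, hence $x_0 = x$ and $x_i = f(x_{i-1})$ for all $i$. Since that lemma is already available, this step poses no real obstacle. The remaining care needed is in the case analysis of the orbit's last point.
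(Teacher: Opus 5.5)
Your proof is correct and follows the paper's argument. Both use Lemma~\ref{reduction_orbit}(1) to reduce to $O^{\geq 0}_f(x) \cap \mathop{\mathrm{dom}}h$ and then use $\mathop{\mathrm{dom}}h \subseteq X - \mathop{\mathrm{dom}}f$; your three-case analysis just spells out the paper's one-line claim that the orbit has at most one point outside $\mathop{\mathrm{dom}}f$. (Minor wording: the third case of Definition~\ref{def:positive_orbit} also covers periodic orbits, which are finite sets, but your argument never uses infiniteness.)
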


\begin{proof}
Fix any point $x \in X$. 
From $\mathop{\mathrm{dom}}h \cap \mathop{\mathrm{dom}}f = \emptyset$, the intersecion $\mathop{\mathrm{dom}}h \cap O^{\geq 0}_f(x)$  contains at most one element.
By the non-negative-valued property and the non-degeneracy of $c$, Lemma~\ref{reduction_orbit} implies that $[x]_f^{0\text{-}\ell^p} = O^{\geq 0}_f(x)$ and so that the intersecion $[x]_f^{0\text{-}\ell^p} \cap \mathop{\mathrm{dom}}h = \mathop{\mathrm{dom}}h \cap O^{\geq 0}_f(x)$  contains at most one element.
\end{proof}

Similarly, we have the following properties. 

\begin{lemma}\label{lem:exit}
For any $(c,\varepsilon)$-fixed point $x \in \mathop{\mathrm{dom}}h \setminus \mathop{\mathrm{dom}}f$, we have that $h_f^{\varepsilon\text{-}\ell^p}(x) = h(x)$.
In other words, we obtain that $h_f^{\varepsilon\text{-}\ell^p} = h$ on $(\mathop{\mathrm{dom}}h \cap E_{c}(\varepsilon)) \setminus \mathop{\mathrm{dom}}f$.
\end{lemma}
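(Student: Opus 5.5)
The plan is to compute the reachable range $R_{h,f}^{\ell^p}(\varepsilon,x)$ directly and show that it equals $\{h(x)\}$. Then $h_f^{\varepsilon\text{-}\ell^p}(x)=\inf\{h(x)\}=h(x)$. I will work with the first description of $R_{h,f}^{\ell^p}(\varepsilon,x)$, the one in terms of the relation $x \overset{\exists}{\underset{f,\varepsilon\text{-}\ell^p}{\rightharpoonup}} h^{-1}(r)$. That relation includes the length-$0$ case $x\in h^{-1}(r)$, and since $x\in\mathop{\mathrm{dom}}h$ this gives $h(x)\in R_{h,f}^{\ell^p}(\varepsilon,x)$ at once. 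In particular $R_{h,f}^{\ell^p}(\varepsilon,x)\neq\emptyset$, so $h_f^{\varepsilon\text{-}\ell^p}(x)$ is the infimum case of the definition.

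The second step is to show that no $\varepsilon$-$\ell^p$-controlled path of positive length starts at $x$. Suppose $(x;x_0,\ldots,x_{n-1};f(x_{n-1}))$ is such a path with $n\geq 1$. By definition $x_0\in\mathop{\mathrm{dom}}f$, while $x\notin\mathop{\mathrm{dom}}f$, so $x_0\neq x$. Because $x$ is a $(c,\varepsilon)$-fixed point, $c(x,\cdot)^{-1}([-\infty,\varepsilon])\setminus\{x\}=\emptyset$, and therefore $\varepsilon_0=c(x,x_0)>\varepsilon$. It remains to see that this contradicts $\varepsilon\geq\|(\varepsilon_i)_{i=0}^{n-1}\|_p$. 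For $p=\infty$ this is immediate, since the norm is a maximum and so is at least $\varepsilon_0>\varepsilon$. For $p\in[1,\infty)$ and non-negative entries, the $\ell^p$-norm dominates each of its entries, so $\|(\varepsilon_i)\|_p\geq\varepsilon_0>\varepsilon$ as well. Either way the path cannot exist, so $[x]_f^{\varepsilon\text{-}\ell^p}$ contributes no further values and $R_{h,f}^{\ell^p}(\varepsilon,x)=\{h(x)\}$. The statement "in other words" then follows pointwise on $(\mathop{\mathrm{dom}}h\cap E_c(\varepsilon))\setminus\mathop{\mathrm{dom}}f$.

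The main obstacle is the finite-$p$ case when $c$ takes negative values. The signed quantity $\|\cdot\|_p$ of Definition~\ref{def:e-p-controlled-path} can then fall below $\varepsilon_0$, because later negative $\varepsilon_i$ cancel it. So the inequality $\|(\varepsilon_i)\|_p\geq\varepsilon_0$ is not automatic. I would first try to get around this using the standing convention $\varepsilon\in[0,\infty]$ for $h_f^{\varepsilon\text{-}\ell^p}$ together with the implicit non-negativity assumptions of the surrounding lemmas (Lemmas~\ref{lem:correspond_extension_h} and \ref{lem:non_deg_ext}). If that fails, I would state the lemma under the hypothesis that $c$ is non-negative-valued or $p=\infty$, because in general the cancellation makes the claim require such a hypothesis.

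A secondary point is the one already used in the first step: $h(x)$ must be recorded through the length-$0$ relation rather than through the set $[x]_f^{\varepsilon\text{-}\ell^p}$, which only collects terminal states of paths of positive length.
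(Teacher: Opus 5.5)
Your proof takes the paper's route, and it supplies the step the paper leaves out. The paper asserts $[x]_f^{\varepsilon\text{-}\ell^p}=\{x\}$ with no justification and reads off $R_{h,f}^{\ell^p}(\varepsilon,x)=\{h(x)\}$. You actually show that no controlled path of positive length leaves $x$: since $x_0\in\mathop{\mathrm{dom}}f$ and $x\notin\mathop{\mathrm{dom}}f$, the first jump costs $c(x,x_0)>\varepsilon$. You also record $h(x)$ through the length-$0$ relation, which is the convention $x\in[x]_f^{\varepsilon\text{-}\ell^p}$ the paper uses tacitly (compare Lemma~\ref{reduction_orbit}). This is a complete proof for $p=\infty$ with arbitrary $c$, and for every $p$ when $c$ is non-negative-valued.

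The obstacle you raise for finite $p$ is genuine. It is a defect of the statement, and of the paper's unproved assertion $[x]_f^{\varepsilon\text{-}\ell^p}=\{x\}$, not of your method, so your ``first try'' cannot succeed. The range $\varepsilon\in[0,\infty]$ does not help, and non-negativity is a hypothesis of Lemmas~\ref{lem:correspond_extension_h} and~\ref{lem:non_deg_ext} but not of this lemma, so it cannot be borrowed; it must be added.

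Here is a counterexample. Let $X=\{x,a,a',b,y\}$ and $\mathop{\mathrm{dom}}f=\{a,a',b\}$ with $f(a)=a'$, $f(a')=x$, $f(b)=y$. Let $\mathop{\mathrm{dom}}h=\{x,y\}$ with $h(x)=1$, $h(y)=0$. Let $c(u,u)=0$, $c(a',b)=-10$, and $c(u,v)=2$ for all other pairs $u\neq v$.
\begin{itemize}
\item Both $x$ and $y$ lie in $E_c(0)$.
\item The path $\gamma=(x;a,b;y)$ has $\varepsilon_0=c(x,a)=2$ and $\varepsilon_1=c(f(a),b)=-10$, so its signed norm is $-(10^p-2^p)^{1/p}<0$ for every $p\in[1,\infty)$.
\item Hence $\gamma$ is a $0$-$\ell^p$-controlled path, and $h_f^{0\text{-}\ell^p}(x)=0\neq h(x)$.
\end{itemize}
The example satisfies all hypotheses of Theorem~\ref{th:01}: $X=\mathop{\mathrm{dom}}f\sqcup\mathop{\mathrm{dom}}h=\bigsqcup_{n}f^{-n}(\mathop{\mathrm{dom}}h)$ and $\mathop{\mathrm{dom}}h\subseteq E_c(0)\setminus\mathop{\mathrm{dom}}f$. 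It therefore contradicts item (1) of that theorem. It also gives $N(\gamma)\cap\mathop{\mathrm{dom}}h=\{x,y\}$, contradicting Lemma~\ref{lem:non_deg_ext_epsilon}.

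So commit to your fallback and add the hypothesis that $p=\infty$ or $c$ is non-negative-valued. It even suffices that $c\geq 0$ on $f(\mathop{\mathrm{dom}}f)\times\mathop{\mathrm{dom}}f$, where every entry $\varepsilon_i$ with $i\geq 1$ lives. Then the signed sum is at least $\varepsilon_0^p$ with $\varepsilon_0>\varepsilon\geq 0$, and your argument goes through unchanged.
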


\begin{proof}
Fix any point $(c,\varepsilon)$-fixed point $x \in \mathop{\mathrm{dom}}h \setminus \mathop{\mathrm{dom}}f$. 
Then $[x]_f^{\varepsilon\text{-}\ell^p} = \{x\}$. 
This implies that $R_{h,f}^{\ell^p}(\varepsilon,x) = \{ h(x) \}$ and so that $h_f^{\varepsilon\text{-}\ell^p}(x) = \inf R_{h,f}^{\ell^p}(\varepsilon,x) = h(x)$.
\end{proof}

\begin{lemma}\label{lem:non_deg_ext_epsilon}
If $\mathop{\mathrm{dom}}h \subseteq E_{c}(\varepsilon) \setminus \mathop{\mathrm{dom}}f$,
then the intersection $N(\gamma) \cap \mathop{\mathrm{dom}}h$ for any $\varepsilon$-controlled path $\gamma$ in $X$ consists of at most one point, which is the terminal state of $\gamma$ if it exists.
\end{lemma}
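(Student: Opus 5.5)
The proof works directly from the definitions of $N(\gamma)$, of $\varepsilon$-$\ell^p$-controlled paths, and of $(c,\varepsilon)$-fixed points. Fix an $\varepsilon$-$\ell^p$-controlled path $\gamma = (x;x_0,\ldots,x_{n-1};f(x_{n-1}))$, so that
\[
N(\gamma)=\{x,f(x_0),\ldots,f(x_{n-1})\}.
\]
The key observation is that any point of $N(\gamma)$ other than the terminal state is immediately followed by a perturbation step. Write $y_0:=x$ and $y_k:=f(x_{k-1})$ for $k\in\{1,\ldots,n\}$. Then, for each $k\in\{0,\ldots,n-1\}$, the path continues from $y_k$ to $x_k\in\mathop{\mathrm{dom}}f$, with cost $\varepsilon_k=c(y_k,x_k)$.

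We show that no $y_k$ with $k\le n-1$ lies in $\mathop{\mathrm{dom}}h$. Suppose, for contradiction, that $y_k\in\mathop{\mathrm{dom}}h$. By hypothesis $y_k\in E_c(\varepsilon)\setminus\mathop{\mathrm{dom}}f$, while $x_k\in\mathop{\mathrm{dom}}f$. Hence $x_k\neq y_k$. Since $y_k$ is a $(c,\varepsilon)$-fixed point, every $z\neq y_k$ satisfies $c(y_k,z)>\varepsilon$. In particular
\[
\varepsilon_k=c(y_k,x_k)>\varepsilon .
\]
It remains to contradict $\varepsilon\ge\|(\varepsilon_i)_{i=0}^{n-1}\|_p$.

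This is the main obstacle. For $p=\infty$ the contradiction is immediate, because the norm is the maximum of the $\varepsilon_i$ and so exceeds $\varepsilon$. For $p<\infty$ we need $\|(\varepsilon_i)\|_p\ge|\varepsilon_k|$. This holds when all $\varepsilon_i\ge0$, which is the case whenever $c$ is non-negative-valued. Since the lemma concerns ``$\varepsilon$-controlled paths'', in parallel with the preceding lemmas, I would make that standing assumption explicit, or note that it is implicit in the use of $\varepsilon\in[0,\infty]$. With general signed costs a negative $\varepsilon_i$ could compensate for the large $\varepsilon_k$, and the claim could fail. Under non-negativity,
\[
\|(\varepsilon_i)\|_p\ge\varepsilon_k>\varepsilon,
\]
which is the desired contradiction.

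Consequently $N(\gamma)\cap\mathop{\mathrm{dom}}h\subseteq\{y_n\}=\{f(x_{n-1})\}$. This intersection has at most one point, and that point, if it exists, is the terminal state of $\gamma$. This is exactly the statement of the lemma.
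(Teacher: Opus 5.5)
Your proof is correct and follows the paper's idea. A point of $\mathop{\mathrm{dom}}h\subseteq E_c(\varepsilon)\setminus\mathop{\mathrm{dom}}f$ cannot be left by any step of an $\varepsilon$-$\ell^p$-controlled path, so it can only occur as the terminal state.

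The paper argues ``$[y]_f^{\varepsilon\text{-}\ell^p}=\{y\}$ for $y\in\mathop{\mathrm{dom}}h$, hence $\gamma$ meets $\mathop{\mathrm{dom}}h$ only at its end''. That inference tacitly assumes the tail $(y_k;x_k,\ldots,x_{n-1};f(x_{n-1}))$ of $\gamma$ is again $\varepsilon$-controlled. Your direct estimate $\|(\varepsilon_i)_{i=0}^{n-1}\|_p\ge\varepsilon_k$ avoids that step and shows that both versions need non-negative step costs when $p<\infty$.

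Your caveat is genuine, and it does not follow from $\varepsilon\in[0,\infty]$. Take $X=\{g,a,b,d,e\}$ with $f(g)=a$, $f(a)=b$, $f(d)=e$ and $\mathop{\mathrm{dom}}h=\{b,e\}$. Set $c(g,a)=-10$, $c(z,z)=0$, all other costs equal to $2$, and $\varepsilon=1$. Then:
\begin{enumerate}[\rm (i)]
\item $\mathop{\mathrm{dom}}h\subseteq E_c(1)\setminus\mathop{\mathrm{dom}}f$, and even $X=\mathop{\mathrm{dom}}f\sqcup\mathop{\mathrm{dom}}h=\bigsqcup_{n}f^{-n}(\mathop{\mathrm{dom}}h)$ holds;
\item no $1$-$\ell^p$-controlled path starts at $b$ or $e$, since every step cost from there is $\ge 0$ and the first is $2$;
\item yet $\gamma=(g;a,d;e)$ has $\|(-10,2)\|_p<0\le 1$ for every $p\in[1,\infty)$, and $N(\gamma)\cap\mathop{\mathrm{dom}}h=\{b,e\}$.
\end{enumerate}
So the hypothesis that $c$ is non-negative-valued (or $p=\infty$) must be stated explicitly, as you propose.
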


\begin{proof}
For any point $x \in \mathop{\mathrm{dom}}h$, we obtain that $[x]_f^{\varepsilon\text{-}\ell^p} = \{x\}$. 
Therefore, any $\varepsilon$-controlled path in $X$ does not contain points of $\mathop{\mathrm{dom}}h$ except for its terminal state.
This implies the assertion.  
\end{proof}

\subsection{$-\varepsilon$-control}

\begin{definition}
For any $\varepsilon \in [0,\infty]$, the extension $h_f^{-\varepsilon\text{-}\ell^p} \colon X \to [-\infty, \infty]$ of $h$, called {\bf $\bm{-\varepsilon}$-control effect function}, is defined for any point $x \in X$ as follows: 
\[
h_f^{-\varepsilon\text{-}\ell^p} (x) := 
\begin{cases}
\infty & \text{if } x \overset{\exists}{\underset{f,0\text{-}\ell^p}{\not\rightharpoonup}} \mathop{\mathrm{dom}}h \hspace{10pt} \left(\mathrm{i.e.} \hspace{5pt} R_{h,f}^{\ell^p}(0,x) = \emptyset \right) \\
\sup R_{h,f}^{\ell^p}(\varepsilon,x) & \text{otherwise}
\end{cases}
\] 
\end{definition}


\begin{definition}
For any $\varepsilon \in [0,\infty]$, we define the {\bf effect function} $\bm{h_f^{\Delta -\varepsilon\text{-}\ell^p}(x)} := h_f^{-\varepsilon\text{-}\ell^p}(x)- h_f^{0\text{-}\ell^p}(x) \geq 0$ {\bf of the difference}.  
\end{definition}

\begin{definition}
Define the {\bf effect function} $\bm{h_f^{\Delta}} \colon [1,\infty] \times [-\infty,\infty] \times X \to [-\infty,\infty]$ {\bf of the difference} by $h_f^{\Delta}(p,\varepsilon, \cdot) := h_f^{\Delta \varepsilon \text{-}\ell^p}$.  
\end{definition}

We have the following observation. 

\begin{lemma}\label{lem:correspondence_zero}
Let $h \colon X \rightharpoonup [-\infty, \infty]$ be a 
partial map.
If $\mathop{\mathrm{dom}}h \cap \mathop{\mathrm{dom}} f  = \emptyset$ and the cost function $c$ is non-negative-valued and non-degenerate, then $h_f^{0\text{-}\ell^p} = h_f^{-0\text{-}\ell^p}$.
\end{lemma}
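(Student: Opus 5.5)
We prove the equality pointwise: fix any $x \in X$ and compare the two definitions case by case.

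For $\varepsilon = 0$, the condition selecting the value $\infty$ is the same in both definitions, namely $x \overset{\exists}{\underset{f,0\text{-}\ell^p}{\not\rightharpoonup}} \mathop{\mathrm{dom}}h$, which by Lemma~\ref{lem:V_eq} is equivalent to $R_{h,f}^{\ell^p}(0,x) = \emptyset$. So it suffices to treat the case $R_{h,f}^{\ell^p}(0,x) \neq \emptyset$. There we must show
\[
\inf R_{h,f}^{\ell^p}(0,x) = \sup R_{h,f}^{\ell^p}(0,x).
\]

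The key step is that $R_{h,f}^{\ell^p}(0,x)$ is a singleton. By definition,
\[
R_{h,f}^{\ell^p}(0,x) = h\left([x]_f^{0\text{-}\ell^p} \cap \mathop{\mathrm{dom}}h\right).
\]
Since $c$ is non-negative-valued and non-degenerate and $\mathop{\mathrm{dom}}h \cap \mathop{\mathrm{dom}}f = \emptyset$, Lemma~\ref{lem:non_deg_ext} gives that $[x]_f^{0\text{-}\ell^p} \cap \mathop{\mathrm{dom}}h$ has at most one element. Its image under $h$ therefore has at most one element. Being non-empty, it equals $\{r\}$ for some $r \in [-\infty,\infty]$.

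Then $\inf\{r\} = r = \sup\{r\}$, so $h_f^{0\text{-}\ell^p}(x) = h_f^{-0\text{-}\ell^p}(x)$. Since $x$ was arbitrary, the two functions coincide.

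There is no genuine obstacle here. The only point needing care is to confirm that both definitions use the same emptiness condition when $\varepsilon=0$ (the $-\varepsilon$ definition tests reachability at level $0$, which matches the $0$-level test). All the substantive content is already supplied by Lemma~\ref{lem:non_deg_ext}, which in turn rests on Lemma~\ref{reduction_orbit}.
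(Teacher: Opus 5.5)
Your proof is correct and follows essentially the same route as the paper: both definitions share the level-$0$ emptiness condition (via Lemma~\ref{lem:V_eq}), and Lemma~\ref{lem:non_deg_ext} makes $R_{h,f}^{\ell^p}(0,x)$ a singleton whenever it is non-empty. Its infimum and supremum therefore agree, giving $h_f^{0\text{-}\ell^p}(x) = h_f^{-0\text{-}\ell^p}(x)$ for every $x$.
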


\begin{proof}
Suppose that $\mathop{\mathrm{dom}}h \cap \mathop{\mathrm{dom}} f  = \emptyset$ and the cost function $c$ is non-negative-valued and non-degenerate. 
Lemma~\ref{lem:non_deg_ext} implies that $\vert[x]_f^{0\text{-}\ell^p} \cap \mathop{\mathrm{dom}}h \vert \leq 1$ for any $x \in X$.
Fix any $x \in X$. 
If $[x]_f^{0\text{-}\ell^p} \cap \mathop{\mathrm{dom}}h = \emptyset$, then $R_{h,f}^{\ell^p}(0,x) = \emptyset$ and so $h_f^{0\text{-}\ell^p} (x) = \infty = h_f^{-0\text{-}\ell^p} (x)$, because of Lemma~\ref{lem:V_eq}.
Thus, we may assume that $[x]_f^{0\text{-}\ell^p} \cap \mathop{\mathrm{dom}}h$ consists of one point. 
Then $h_f^{0\text{-}\ell^p} (x) = \inf R_{h,f}^{\ell^p}(0,x) = \sup R_{h,f}^{\ell^p}(0,x) = h_f^{-0\text{-}\ell^p} (x)$. 
\end{proof}

By definition, we have the following inequalities. 

\begin{lemma}\label{lem:decreasing}
The following statement holds for any point $x \in X$ and any $\varepsilon \leq \varepsilon' \in (0,\infty]$: 
\[
h_f^{\varepsilon'\text{-}\ell^p}(x) \leq h_f^{\varepsilon\text{-}\ell^p}(x) \leq h_f^{0\text{-}\ell^p}(x) \leq h_f^{-0\text{-}\ell^p}(x) \leq h_f^{-\varepsilon\text{-}\ell^p}(x) \leq h_f^{-\varepsilon'\text{-}\ell^p}(x)
\]
\end{lemma}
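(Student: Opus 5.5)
The plan is to reduce every inequality to one monotonicity fact about reachable sets, and then use elementary properties of $\inf$ and $\sup$ under set inclusion. The basic observation is this. If $0 \leq \varepsilon \leq \varepsilon'$, then every $\varepsilon$-$\ell^p$-controlled path is also an $\varepsilon'$-$\ell^p$-controlled path. Indeed, the defining condition $\varepsilon \geq \| ( \varepsilon_i )_{i=0}^{n-1} \|_{p}$ together with $\varepsilon \leq \varepsilon'$ gives $\varepsilon' \geq \| ( \varepsilon_i )_{i=0}^{n-1} \|_{p}$, and the other requirements in Definition~\ref{def:e-p-controlled-path} do not involve $\varepsilon$. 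Hence
\[
[x]_f^{0\text{-}\ell^p} \subseteq [x]_f^{\varepsilon\text{-}\ell^p} \subseteq [x]_f^{\varepsilon'\text{-}\ell^p}
\]
for all $x \in X$. Intersecting with $\mathop{\mathrm{dom}}h$ and applying $h$ gives
\[
R_{h,f}^{\ell^p}(0,x) \subseteq R_{h,f}^{\ell^p}(\varepsilon,x) \subseteq R_{h,f}^{\ell^p}(\varepsilon',x).
\]
This is the same reasoning as the first line of Lemma~\ref{lem:inclusion_V}, which I could cite directly.

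For the left half of the chain, I use the identity $h_f^{\delta\text{-}\ell^p}(x) = \inf R_{h,f}^{\ell^p}(\delta,x)$ with the convention $\inf \emptyset = \infty$, noted right after the definition of $h_f^{\varepsilon\text{-}\ell^p}$. The infimum is antitone with respect to inclusion, including when the smaller set is empty. So the inclusions above yield $h_f^{\varepsilon'\text{-}\ell^p}(x) \leq h_f^{\varepsilon\text{-}\ell^p}(x) \leq h_f^{0\text{-}\ell^p}(x)$.

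For the middle and right parts I split into two cases according to whether $R_{h,f}^{\ell^p}(0,x)$ is empty.

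\emph{Case $R_{h,f}^{\ell^p}(0,x) = \emptyset$.} By definition $h_f^{-\delta\text{-}\ell^p}(x) = \infty$ for every $\delta \in \{0,\varepsilon,\varepsilon'\}$, and $h_f^{0\text{-}\ell^p}(x) = \inf \emptyset = \infty$. All the inequalities from $h_f^{0\text{-}\ell^p}(x)$ onward are therefore equalities $\infty = \infty$.

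\emph{Case $R_{h,f}^{\ell^p}(0,x) \neq \emptyset$.} Then $h_f^{-\delta\text{-}\ell^p}(x) = \sup R_{h,f}^{\ell^p}(\delta,x)$ for each $\delta$. Because a nonempty subset of $[-\infty,\infty]$ satisfies $\inf \leq \sup$, we get $h_f^{0\text{-}\ell^p}(x) \leq h_f^{-0\text{-}\ell^p}(x)$. Since $\sup$ is monotone with respect to inclusion, the chain of inclusions of reachable ranges gives $h_f^{-0\text{-}\ell^p}(x) \leq h_f^{-\varepsilon\text{-}\ell^p}(x) \leq h_f^{-\varepsilon'\text{-}\ell^p}(x)$.

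There is no genuine obstacle here. The only point needing care is the asymmetric definition of $h_f^{-\varepsilon\text{-}\ell^p}$: its infinity case is triggered by reachability at level $0$, not at level $\varepsilon$. This is exactly why the split is made on $R_{h,f}^{\ell^p}(0,x)$. With that choice, the infinity cases line up consistently, and in the nonempty case all the relevant ranges are nonempty, so the suprema are the honest ones.
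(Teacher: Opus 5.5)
Your proof is correct and follows the paper's argument essentially step for step. It uses the inclusion $R_{h,f}^{\ell^p}(0,x) \subseteq R_{h,f}^{\ell^p}(\varepsilon,x) \subseteq R_{h,f}^{\ell^p}(\varepsilon',x)$, antitonicity of $\inf$ for the left half, and a case split on whether $x$ reaches $\mathop{\mathrm{dom}}h$ under $0$-control, with $\inf \leq \sup$ and monotonicity of $\sup$ in the nonempty case; you are only slightly more explicit than the paper in deriving the inclusion from the definition of controlled paths and in noting $h_f^{0\text{-}\ell^p}(x) = \infty$ in the empty case.
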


\begin{proof}
Fix any $x \in X$ and any $\varepsilon \leq \varepsilon' \in [0,\infty]$. 
By definition of $R_{h,f}^{\ell^p}$, we have that $R_{h,f}^{\ell^p}(\varepsilon,x) \subseteq R_{h,f}^{\ell^p}(\varepsilon',x)$. 
From $h_f^{\varepsilon''\text{-}\ell^p} (x) = \inf R_{h,f}^{\ell^p}(\varepsilon'',x)$ for any $\varepsilon'' \in [0,\infty]$, we obtain that $h_f^{0\text{-}\ell^p}(x) \geq h_f^{\varepsilon\text{-}\ell^p}(x) \geq h_f^{\varepsilon'\text{-}\ell^p}(x)$. 
If $x \overset{\exists}{\underset{f,0\text{-}\ell^p}{\not\rightharpoonup}} \mathop{\mathrm{dom}}h$, then Lemma~\ref{lem:V_eq} implies that $h_f^{-\varepsilon'\text{-}\ell^p}(x) = \infty$ for any $\varepsilon' \in [0,\infty]$, which implies the assertion in this case. 
Thus, we may assume that $x \overset{\exists}{\underset{f,0\text{-}\ell^p}{\rightharpoonup}} \mathop{\mathrm{dom}}h$. 
By $R_{h,f}^{\ell^p}(\varepsilon,x) \subseteq R_{h,f}^{\ell^p}(\varepsilon',x)$, since $h_f^{-\varepsilon''\text{-}\ell^p} (x) = \sup R_{h,f}^{\ell^p}(\varepsilon'',x)$ for any $\varepsilon'' \in [0,\infty]$, we obtain $h_f^{0\text{-}\ell^p}(x) = \inf R_{h,f}^{\ell^p}(0,x) \leq \sup R_{h,f}^{\ell^p}(0,x) =h_f^{-0\text{-}\ell^p}(x) \leq h_f^{-\varepsilon\text{-}\ell^p}(x) \leq h_f^{-\varepsilon'\text{-}\ell^p}(x)$.
\end{proof}

\begin{corollary}
For any $x \in X$, the effect function $h_f^{\Delta \cdot \text{-}\ell^p}(x) \colon [-\infty,\infty] \to [-\infty,\infty]$ is weakly decreasing.
\end{corollary}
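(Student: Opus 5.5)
The corollary asks us to show that the function $\varepsilon \mapsto h_f^{\Delta \varepsilon\text{-}\ell^p}(x)$ is weakly decreasing. The plan is to read it off from the chain of inequalities in Lemma~\ref{lem:decreasing}, because the subtracted term $h_f^{0\text{-}\ell^p}(x)$ is the same for every value of the parameter. Fix $x \in X$ and write $a := h_f^{0\text{-}\ell^p}(x)$. For a nonnegative parameter $\varepsilon$, the function in question is $\varepsilon \mapsto h_f^{\varepsilon\text{-}\ell^p}(x) - a$. For a value $-\varepsilon$ with $\varepsilon \ge 0$, it is the $-\varepsilon$-control effect function minus $a$, that is, $h_f^{-\varepsilon\text{-}\ell^p}(x) - a$.

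\emph{Step 1 (monotonicity before subtraction).} I would first prove that the map $\varepsilon \mapsto h_f^{\varepsilon\text{-}\ell^p}(x)$ is weakly decreasing on the whole extended line. For $0 \le \varepsilon \le \varepsilon'$, Lemma~\ref{lem:decreasing} gives $h_f^{\varepsilon'\text{-}\ell^p}(x) \le h_f^{\varepsilon\text{-}\ell^p}(x)$. The endpoint $\varepsilon = 0$ is also covered, since the proof of that lemma works for all $\varepsilon \le \varepsilon' \in [0,\infty]$. For negative parameters $-\varepsilon' \le -\varepsilon \le 0$, the same lemma gives $h_f^{-0\text{-}\ell^p}(x) \le h_f^{-\varepsilon\text{-}\ell^p}(x) \le h_f^{-\varepsilon'\text{-}\ell^p}(x)$, so the values increase as the parameter moves toward $-\infty$, which is again weak decrease. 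The middle inequality $h_f^{0\text{-}\ell^p}(x) \le h_f^{-0\text{-}\ell^p}(x)$ joins the two halves. Hence the full chain in Lemma~\ref{lem:decreasing} is exactly the statement that the map is weakly decreasing on $[-\infty,\infty]$.

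\emph{Step 2 (subtracting the constant $a$).} Next I would check that $t \mapsto t - a$ is weakly increasing on $[-\infty,\infty]$ under the paper's conventions $\infty - \infty := 0$ and $r - \infty := -\infty$. Composing this with the decreasing map from Step~1 then gives the corollary.
\begin{itemize}
\item If $a$ is finite, then $t \mapsto t - a$ is an order-preserving translation of the extended line, and there is nothing more to check.
\item If $a = \infty$, then Lemma~\ref{lem:V_eq} shows that $x$ does not reach $\operatorname{dom} h$ under $0$-control. By definition every $-\varepsilon$ value then equals $\infty$, so every value on the negative side becomes $\infty - \infty = 0$. On the positive side, $t - \infty$ equals $-\infty$ for $t < \infty$ and equals $0$ for $t = \infty$, so it is monotone in $t$. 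The one place to confirm that no reversal occurs is the junction between the two sides. There, the positive side takes values in $\{-\infty, 0\}$ and the negative side is constantly $0$, so the order is preserved.
\item If $a = -\infty$, the convention for $r - (-\infty)$ is not spelled out in the paper. I would adopt the natural reading $r - (-\infty) = \infty$ for $r > -\infty$ and $(-\infty) - (-\infty) = 0$. By Step~1, all positive-parameter values are then $\le a = -\infty$, so they all give $0$. The negative-side values give $0$ or $\infty$, in a weakly increasing way as the parameter decreases, so monotonicity again holds.
\end{itemize}

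The main obstacle is this bookkeeping with the extended-real conventions in Step~2, especially the cases $a = \pm\infty$. The underlying inequalities all come directly from Lemma~\ref{lem:decreasing}.
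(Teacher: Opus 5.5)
Your proposal is correct and takes essentially the paper's route: the paper gives no separate proof and reads the corollary off Lemma~\ref{lem:decreasing}, the fixed quantity $h_f^{0\text{-}\ell^p}(x)$ being subtracted throughout, and your Step~2 makes explicit that $t \mapsto t-a$ preserves order under the extended-real conventions. One small slip: $a=\infty$ does not force $x$ to miss $\mathop{\mathrm{dom}}h$, since it also happens when $R_{h,f}^{\ell^p}(0,x)=\{\infty\}$ ($h$ may take the value $\infty$); the fact you need, $h_f^{-\varepsilon\text{-}\ell^p}(x)=\infty$ for all $\varepsilon\geq 0$, still follows from Step~1 via $h_f^{-\varepsilon\text{-}\ell^p}(x)\geq h_f^{0\text{-}\ell^p}(x)=\infty$.
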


Moreover, we have the following relation. 

\begin{lemma}\label{lem:ineq_h}
The following statement holds for any point $x \in X$, any $\varepsilon \in [-\infty,\infty]$, and any $p \leq p' \in [1,\infty]$: 
\[
h_f^{\varepsilon\text{-}\ell^{p'}} (x) \leq h_f^{\varepsilon\text{-}\ell^p} (x) \leq h_f^{-\varepsilon\text{-}\ell^p} (x) \leq  h_f^{-\varepsilon\text{-}\ell^{p'}} (x) 
\]
\end{lemma}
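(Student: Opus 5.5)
The argument reduces everything to the set inclusion
\[
R_{h,f}^{\ell^p}(\varepsilon,x) \subseteq R_{h,f}^{\ell^{p'}}(\varepsilon,x) \qquad (p \leq p'),
\]
which is the first line of Lemma~\ref{lem:inclusion_V}. Its justification is the monotonicity of $\ell^p$-norms: $\|(\varepsilon_i)_{i=0}^{n-1}\|_{p'} \leq \|(\varepsilon_i)_{i=0}^{n-1}\|_{p}$. Hence every $\varepsilon$-$\ell^p$-controlled path is also an $\varepsilon$-$\ell^{p'}$-controlled path, and so $[x]_f^{\varepsilon\text{-}\ell^p} \subseteq [x]_f^{\varepsilon\text{-}\ell^{p'}}$. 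Intersecting with $\mathop{\mathrm{dom}}h$ and applying $h$ gives the displayed inclusion. Lemma~\ref{lem:inclusion_V} is only stated for $\varepsilon \in [0,\infty]$. Since $h_f^{\varepsilon\text{-}\ell^p}$ and $h_f^{-\varepsilon\text{-}\ell^p}$ are only defined for $\varepsilon \in [0,\infty]$, I would restrict attention to that range. In this range all relevant paths can be taken to have non-negative terms, or I would simply cite the lemma as given.

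For the left inequality, I would use the definition $h_f^{\varepsilon\text{-}\ell^q}(x) = \inf R_{h,f}^{\ell^q}(\varepsilon,x)$, valid for every $q$ with the convention $\inf\emptyset = \infty$. The infimum over the larger set $R_{h,f}^{\ell^{p'}}(\varepsilon,x)$ is at most the infimum over the smaller set, which gives $h_f^{\varepsilon\text{-}\ell^{p'}}(x) \leq h_f^{\varepsilon\text{-}\ell^p}(x)$.

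For the middle inequality, I would split into cases according to whether $x \overset{\exists}{\underset{f,0\text{-}\ell^p}{\rightharpoonup}} \mathop{\mathrm{dom}}h$ holds, following the proof of Lemma~\ref{lem:decreasing}. If it fails, then $h_f^{-\varepsilon\text{-}\ell^p}(x) = \infty$ and there is nothing to prove. If it holds, then $R_{h,f}^{\ell^p}(0,x) \neq \emptyset$, and since $R_{h,f}^{\ell^p}(0,x) \subseteq R_{h,f}^{\ell^p}(\varepsilon,x)$ the set $R_{h,f}^{\ell^p}(\varepsilon,x)$ is nonempty. Therefore $\inf R_{h,f}^{\ell^p}(\varepsilon,x) \leq \sup R_{h,f}^{\ell^p}(\varepsilon,x)$. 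Alternatively, this inequality follows directly from the chain in Lemma~\ref{lem:decreasing}.

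For the right inequality, $h_f^{-\varepsilon\text{-}\ell^p}(x) \leq h_f^{-\varepsilon\text{-}\ell^{p'}}(x)$, the only delicate point is the case split, and I expect it to be the main obstacle. The trigger for the value $\infty$ is $0$-reachability, which depends on the norm index. If $x \overset{\exists}{\underset{f,0\text{-}\ell^{p'}}{\not\rightharpoonup}} \mathop{\mathrm{dom}}h$, then the right-hand side is $\infty$ and the inequality is trivial. Otherwise, I would check whether $0$-reachability under $\ell^p$ also holds. For $0$-controlled paths with non-negative costs, the conditions $\|\cdot\|_p \leq 0$ and $\|\cdot\|_{p'} \leq 0$ both mean that all $\varepsilon_i = 0$, so they agree. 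In that case both sides are suprema, and the inclusion $R_{h,f}^{\ell^p}(\varepsilon,x) \subseteq R_{h,f}^{\ell^{p'}}(\varepsilon,x)$ gives $\sup$ over the smaller set $\leq$ $\sup$ over the larger set, which is the claim. If the two reachability conditions could differ in general (for signed costs), I would handle this by the norm monotonicity: $\ell^p$ $0$-reachability implies $\ell^{p'}$ $0$-reachability. The residual case, where $\ell^{p'}$ reaches $\mathop{\mathrm{dom}}h$ at cost $0$ but $\ell^p$ does not, would make the left side $\infty$. I would exclude that case by the non-negativity assumption implicit in Lemma~\ref{lem:inclusion_V}, where the two $0$-reachability notions coincide.
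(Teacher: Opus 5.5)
Your argument is correct under the assumption $c\ge 0$, and it follows the paper's route. The paper's entire proof is the one line ``by the monotonicity of $\ell^p$-norms''; you spell out what it leaves implicit: the inclusion $R_{h,f}^{\ell^p}(\varepsilon,x)\subseteq R_{h,f}^{\ell^{p'}}(\varepsilon,x)$, the $\inf$/$\sup$ comparisons, and the fact that the $\infty$-trigger ($0$-reachability of $\mathop{\mathrm{dom}}h$) does not depend on $p$.

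The non-negativity of $c$ that you invoke at the end is needed from the very first step, not only for the residual case. Restricting to $\varepsilon\ge 0$ does not make the individual costs $\varepsilon_i$ non-negative. For signed costs the signed $\ell^p$-value is not monotone in $p$: for example, $(1,-1)$ gives $0$ for every finite $p$ but $1$ for $p=\infty$. So your ``non-negative terms'' hedge and your signed-cost remark about $0$-reachability do not hold, and both your proof and the paper's tacitly require $c\ge 0$.
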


\begin{proof}
By the monotonicity of $\ell^p$-norms, the inequality holds.
\end{proof}

\subsubsection{Control effect function}

\begin{definition}
The extension $h_f^{\ell^p} \colon ([-\infty,-0] \sqcup [0, \infty]) \times X \to [-\infty, \infty]$ of $h$, called {\bf control effect function with respect to $\bm{\ell^p}$}, is defined by $h_f^{\ell^p}(\varepsilon,x) := h_f^{\varepsilon\text{-}\ell^p} (x)$. 
\end{definition}

\begin{definition}
The extension $h_f \colon [1,\infty] \times ([-\infty,-0] \sqcup [0, \infty]) \times X \to [-\infty, \infty]$ of $h$, called {\bf control effect function}, is defined by $h_f(p,\varepsilon,x) := h_f^{\ell^p}(\varepsilon,x)$.
\end{definition}

\subsubsection{Properties of control effect functions}

We have the following statement.

\begin{proposition}\label{th:h_f}
Let $f \colon X \rightharpoonup X$ be a partial map on a set $X$ with a cost function $c \colon X^2 \to [-\infty,\infty]$. 
Then the extension $h_f \colon [1,\infty] \times ([-\infty,-0] \sqcup [0, \infty]) \times X \to [-\infty, \infty]$ of a partial map $h \colon X \rightharpoonup [-\infty, \infty]$ satisfies the following properties: 
\begin{enumerate}[\rm (1)]
\item The function $h_f$ is weakly decreasing in the second component.
\item For any $\varepsilon \in [-\infty,\infty]$ and any $p \leq p' \in [1,\infty]$, we have the following inequality: 
\[
h_f(p',\varepsilon, \cdot) \leq h_f(p,\varepsilon, \cdot) \leq h_f(p,-\varepsilon, \cdot) \leq  h_f(p',-\varepsilon, \cdot)
\]
\item If $\mathop{\mathrm{dom}}h \cap \mathop{\mathrm{dom}} f  = \emptyset$ and $c$ is non-negative-valued and non-degenerate, then $h_f(\cdot, -0, \cdot) = h_f(\cdot, 0, \cdot)$. 
\end{enumerate}
\end{proposition}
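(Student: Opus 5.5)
The proposition only repackages results already proved, so the plan is to translate each item through the definitions $h_f(p,\varepsilon,x) := h_f^{\ell^p}(\varepsilon,x) := h_f^{\varepsilon\text{-}\ell^p}(x)$ and then invoke the corresponding lemma. Throughout I would read the second component on the ordered set $[-\infty,-0] \sqcup [0,\infty]$. In this order, $-\varepsilon' \leq -\varepsilon \leq -0 < 0 \leq \varepsilon \leq \varepsilon'$ whenever $0 \leq \varepsilon \leq \varepsilon'$, and the value at $-\varepsilon$ means $h_f^{-\varepsilon\text{-}\ell^p}$.

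For (1), I fix $p$ and $x$ and use the chain of Lemma~\ref{lem:decreasing}:
\[
h_f^{\varepsilon'\text{-}\ell^p}(x) \leq h_f^{\varepsilon\text{-}\ell^p}(x) \leq h_f^{0\text{-}\ell^p}(x) \leq h_f^{-0\text{-}\ell^p}(x) \leq h_f^{-\varepsilon\text{-}\ell^p}(x) \leq h_f^{-\varepsilon'\text{-}\ell^p}(x).
\]
Read from right to left along increasing parameter $-\varepsilon' \leq -\varepsilon \leq -0 \leq 0 \leq \varepsilon \leq \varepsilon'$, this says exactly that $\varepsilon \mapsto h_f(p,\varepsilon,x)$ is weakly decreasing. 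The only care needed is the endpoint $\varepsilon = 0$, which the lemma's proof handles, since the inclusion $R_{h,f}^{\ell^p}(0,x) \subseteq R_{h,f}^{\ell^p}(\varepsilon,x)$ holds for all $\varepsilon \in [0,\infty]$. So I would remark that the lemma holds for $\varepsilon \leq \varepsilon' \in [0,\infty]$, as its proof indeed begins.

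For (2), the inequality is precisely Lemma~\ref{lem:ineq_h} rewritten in the $h_f$ notation. That lemma rests on the observation that for $p \leq p'$ every $\varepsilon$-$\ell^p$-controlled path is an $\varepsilon$-$\ell^{p'}$-controlled path. Hence $R^{\ell^p}_{h,f}(\varepsilon,x) \subseteq R^{\ell^{p'}}_{h,f}(\varepsilon,x)$, so the infimum over the larger set is smaller and the supremum is larger. The middle inequality $\inf \leq \sup$ holds whenever the range is nonempty. In the empty case both sides are $\infty$, or the $-\varepsilon$ side is $\infty$ by definition; for the outer inequality one also needs the $0$-reachability condition to pass from $p$ to $p'$, which again follows from the same path inclusion.

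For (3), under the hypotheses $\mathop{\mathrm{dom}}h \cap \mathop{\mathrm{dom}}f = \emptyset$ and $c$ non-negative-valued and non-degenerate, Lemma~\ref{lem:correspondence_zero} gives $h_f^{0\text{-}\ell^p} = h_f^{-0\text{-}\ell^p}$ for each $p$. That is $h_f(p,-0,\cdot) = h_f(p,0,\cdot)$ for all $p \in [1,\infty]$.

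The only step I expect to need real attention is the monotonicity in $p$ underlying (2). It requires $\|(\varepsilon_i)\|_{p'} \leq \|(\varepsilon_i)\|_p$, which is standard for non-negative entries, but the paper allows signed costs with its signed $\ell^p$ expression. I would take Lemma~\ref{lem:ineq_h} as given, as permitted, and only note that the argument is the standard norm inequality when $c$ is non-negative.
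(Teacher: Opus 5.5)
Your route is the paper's own. Its proof consists exactly of citing Lemma~\ref{lem:decreasing} for (1), Lemma~\ref{lem:ineq_h} for (2) and Lemma~\ref{lem:correspondence_zero} for (3), and your treatment of (1) and (3) is fine.

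The problem is (2). The concern you raise there cannot be set aside by ``taking Lemma~\ref{lem:ineq_h} as given'': for signed cost functions that lemma is false, and so is item (2) itself. Your proof therefore inherits a gap that the paper's proof also has. Take $X=\{x,a,b,y\}$ and $\mathop{\mathrm{dom}}f=\{a,b\}$ with $f(a)=a$, $f(b)=y$. Let $\mathop{\mathrm{dom}}h=\{y\}$ with $h(y)=0$. Set $c(x,a)=3$, $c(a,b)=-2$, $c(u,u)=0$ for all $u$, and $c=\infty$ otherwise. Every controlled path from $x$ to $y$ that avoids infinite costs has cost sequence $(3,0,\ldots,0,-2)$. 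Its signed $\ell^p$-value is $(3^p-2^p)^{1/p}$ for $p<\infty$ and $3$ for $p=\infty$, and this is $\leq 1$ only for $p=1$. Hence $h_f(1,1,x)=0$, while $h_f(p',1,x)=\infty$ for every $p'>1$. This contradicts $h_f(p',\varepsilon,\cdot)\leq h_f(p,\varepsilon,\cdot)$. So (2) needs the extra hypothesis that $c$ is non-negative-valued. Under that hypothesis your path-inclusion argument gives the first inequality, and the middle one is $\inf\leq\sup$ as you say.

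Even for non-negative $c$, your justification of the last inequality $h_f(p,-\varepsilon,\cdot)\leq h_f(p',-\varepsilon,\cdot)$ points the wrong way. The delicate case is $R_{h,f}^{\ell^p}(0,x)=\emptyset$. There the left side is $\infty$ by definition, so you need the right side to be $\infty$ as well. That is, $0$-reachability must pass from $p'$ down to $p$. Path inclusion gives only the converse implication, and that one is never needed. When $R_{h,f}^{\ell^p}(0,x)\neq\emptyset$, either $R_{h,f}^{\ell^{p'}}(0,x)=\emptyset$ and the right side is $\infty$, or both sides are suprema over the nested sets $R_{h,f}^{\ell^p}(\varepsilon,x)\subseteq R_{h,f}^{\ell^{p'}}(\varepsilon,x)$. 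The correct reason is different: for non-negative $c$, a $0$-$\ell^q$-controlled path is exactly one with all $\varepsilon_i=0$, independently of $q$. So $R_{h,f}^{\ell^q}(0,x)$ does not depend on $q$. For signed costs this step fails too: $(2,2,-3)$ has signed $\ell^1$-value $1$ but $\ell^2$-value $-1$.
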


\begin{proof}
Lemma~\ref{lem:decreasing} implies that $h_f$ is weakly decreasing in the second component. 
By Lemma~\ref{lem:ineq_h}, assertion (2) holds. 
Lemma~\ref{lem:correspondence_zero} implies the equality $h_f(\cdot, -0, \cdot) = h_f(\cdot, 0, \cdot)$. 
\end{proof}





Properties of an $\varepsilon$-controlled path to the domain $\mathop{\mathrm{dom}}h$ are described as follows. 

\begin{proposition}\label{prop:e_controll}
Suppose that $X = \mathop{\mathrm{dom}} f  \sqcup \mathop{\mathrm{dom}}h = \sqcup_{k = 0}^\infty f^{-k}(\mathop{\mathrm{dom}}h)$. 
The following statements hold for any $\varepsilon \in [0,\infty)$, any $x \in X$, and any $\varepsilon$-conrolled path $\gamma$ from $x$ to a $(c,\varepsilon)$-fixed point $y \in \mathop{\mathrm{dom}}h \cap E_{c}(\varepsilon)$ with $h_f^{\varepsilon\text{-}\ell^p}(y) = h_f^{\varepsilon\text{-}\ell^p}(x)$: 
\begin{enumerate}[\rm (1)]
\item 
$h(y) = h_f^{\varepsilon\text{-}\ell^p}(y) = \min_{} h\left([x]^{\varepsilon\text{-}\ell^p}_f \cap \mathop{\mathrm{dom}}h\right)$.
\item For any point $x' \in N(\gamma)$, we have the following equality: 
\[
h_f^{\varepsilon\text{-}\ell^p}(x') = \min_{} h\left([x']^{\varepsilon\text{-}\ell^p}_f \cap \mathop{\mathrm{dom}}h\right) 
= h(y) 
\]
\end{enumerate}
\end{proposition}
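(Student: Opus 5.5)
The plan is to reduce both assertions to one observation: the terminal state $y$ lies in $[x]^{\varepsilon\text{-}\ell^p}_f \cap \mathop{\mathrm{dom}}h$, and the hypothesis $h_f^{\varepsilon\text{-}\ell^p}(y) = h_f^{\varepsilon\text{-}\ell^p}(x)$ forces $h(y)$ to attain the infimum defining $h_f^{\varepsilon\text{-}\ell^p}(x)$.

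\textbf{First step.} Compute $h_f^{\varepsilon\text{-}\ell^p}(y)$. The decomposition $X = \mathop{\mathrm{dom}} f \sqcup \mathop{\mathrm{dom}}h$ gives $y \in \mathop{\mathrm{dom}}h \setminus \mathop{\mathrm{dom}}f$, and $y \in E_c(\varepsilon)$ by hypothesis. So Lemma~\ref{lem:exit} applies and yields $h_f^{\varepsilon\text{-}\ell^p}(y) = h(y)$.

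\textbf{Second step.} Since $\gamma$ is an $\varepsilon$-$\ell^p$-controlled path from $x$ to $y$, we have $y \in [x]^{\varepsilon\text{-}\ell^p}_f \cap \mathop{\mathrm{dom}}h$. Hence
\[
h(y) \geq \inf h\left([x]^{\varepsilon\text{-}\ell^p}_f \cap \mathop{\mathrm{dom}}h\right) = h_f^{\varepsilon\text{-}\ell^p}(x) = h_f^{\varepsilon\text{-}\ell^p}(y) = h(y).
\]
Equality holds throughout, and because the value $h(y)$ is attained, the infimum is a minimum. This proves (1).

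\textbf{Third step: assertion (2).} Take $x' \in N(\gamma)$. If $x' = x$, the claim is (1). Otherwise, by the remark after the definition of $N(\gamma)$, we have $x' = f(x_{k-1})$ for some $k$. Then the tail $(x'; x_k, \ldots, x_{n-1}; y)$ should be an $\varepsilon$-$\ell^p$-controlled path from $x'$ to $y$. This immediately gives $h_f^{\varepsilon\text{-}\ell^p}(x') \leq h(y)$.

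For the reverse inequality, the natural move is to concatenate. Any controlled path from $x'$ into $\mathop{\mathrm{dom}}h$ would be preceded by the initial segment $(x; x_0, \ldots, x_{k-1}; x')$, producing a controlled path from $x$. Then $[x']^{\varepsilon\text{-}\ell^p}_f \subseteq [x]^{\varepsilon\text{-}\ell^p}_f$, so
\[
h_f^{\varepsilon\text{-}\ell^p}(x') \geq h_f^{\varepsilon\text{-}\ell^p}(x) = h(y).
\]
Once the two inequalities give $h_f^{\varepsilon\text{-}\ell^p}(x') = h(y)$, the infimum for $x'$ is attained at $y \in [x']^{\varepsilon\text{-}\ell^p}_f \cap \mathop{\mathrm{dom}}h$, so it is a minimum.

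\textbf{Main obstacle.} Both the tail and the concatenation claims need care, because the cost budget is measured by an $\ell^p$-type norm of the whole path.

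For the tail, one must check that the signed $p$-sum of a sub-sequence is at most $\varepsilon$. This holds when $c$ is non-negative-valued, since dropping terms only decreases the norm, and trivially when $p = \infty$. In general, negative costs in the dropped prefix could invalidate it. I would first try to justify the tail bound directly. If that fails, I would note that the statement implicitly uses non-negative costs, as in the earlier lemmas, and argue under that assumption.

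For concatenation, the total norm can exceed $\varepsilon$, so $[x']^{\varepsilon\text{-}\ell^p}_f \subseteq [x]^{\varepsilon\text{-}\ell^p}_f$ is not automatic. Here I would instead exploit the structure $X = \bigsqcup_k f^{-k}(\mathop{\mathrm{dom}}h)$ together with minimality at $x$. If a cheaper endpoint $y'$ with $h(y') < h(y)$ were reachable from $x'$ within budget, I would try to show it is reachable from $x$ within budget, contradicting (1). This is exactly the delicate point. If the monotonicity fails, I would fall back to interpreting (2) with the budget remaining along $\gamma$, or restrict to $p = \infty$, where concatenation preserves the $\varepsilon$ bound.
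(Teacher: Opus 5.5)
Your route is the paper's: (1) follows from Lemma~\ref{lem:exit} together with $y\in[x]^{\varepsilon\text{-}\ell^p}_f\cap\mathop{\mathrm{dom}}h$, and (2) follows from the sandwich $h(y)\ge h_f^{\varepsilon\text{-}\ell^p}(x')\ge h_f^{\varepsilon\text{-}\ell^p}(x)=h(y)$. The paper supports that sandwich only by asserting $y\in[x']^{\varepsilon\text{-}\ell^p}_f\subseteq[x]^{\varepsilon\text{-}\ell^p}_f$ without proof.

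You have located exactly the missing step, and it cannot be supplied. For $p<\infty$ the concatenation inclusion is false, and assertion (2) itself fails, even for a non-negative, non-degenerate cost. Take $X=\{x,a,b,d,y,y'\}$, $f(x)=f(b)=y$, $f(a)=b$, $f(d)=y'$, $h(y)=0$, $h(y')=-1$, $c(x,a)=c(b,d)=1$, $c=0$ on the diagonal and $c=\infty$ elsewhere, $\varepsilon=1$, $p=1$. The hypotheses hold:
\begin{itemize}
\item the sets $f^{-k}(\mathop{\mathrm{dom}}h)$ for $k=0,1,2$ are $\{y,y'\}$, $\{x,b,d\}$, $\{a\}$, and they partition $X$;
\item $y$ and $y'$ are $(c,1)$-fixed.
\end{itemize}
The path $\gamma=(x;a,b;y)$ has costs $(1,0)$, so it is admissible. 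The only path from $x$ that could end at $y'$ is $(x;a,d;y')$, of norm $2>1$. Hence $[x]^{1\text{-}\ell^1}_f=\{x,b,y\}$ and $h_f^{1\text{-}\ell^1}(x)=0=h_f^{1\text{-}\ell^1}(y)$. Yet $b\in N(\gamma)$ and $(b;d;y')$ has norm $1$, so $h_f^{1\text{-}\ell^1}(b)=-1\neq h(y)$. For any $p<\infty$ the same example works, since the offending concatenation has norm $2^{1/p}>1$.

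Consequently your planned rescue cannot succeed, and neither can any other argument. That rescue was to pull a cheaper endpoint reachable from $x'$ back to $x$, using $X=\bigsqcup_k f^{-k}(\mathop{\mathrm{dom}}h)$ and minimality at $x$.

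Your fallback of restricting to $p=\infty$ is the correct repair. The maximum over a terminal segment of a cost sequence is still $\le\varepsilon$, whatever the signs of $c$. So is the maximum over the concatenation of two cost sequences each bounded by $\varepsilon$. Both the tail and the concatenation steps then hold, and your proof (equivalently, the paper's) becomes complete.

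The residual-budget reading also gives a true statement when $c\ge 0$, but a different one. There $x'=f(x_{k-1})$ is measured against $(\varepsilon^p-\sum_{i<k}\varepsilon_i^p)^{1/p}$.

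Finally, the sign problem you raised for the tail also affects your first step. Lemma~\ref{lem:exit} rests on $[y]^{\varepsilon\text{-}\ell^p}_f=\{y\}$. That needs $c(y,x_0)$ to be bounded by the norm of the path, which holds for $p=\infty$ or $c\ge0$. For $p<\infty$ with negative costs it fails: a step of cost $10$ out of $y$ followed by one of cost $-20$ has $\ell^1$-norm $-10\le\varepsilon$, and then (1) can fail too.
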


\begin{proof}
Since $X = \sqcup_{n = 0}^\infty f^{-n}(\mathop{\mathrm{dom}}h)$, for any $x' \in X$, there is a non-negative integer $n \in \Z_{\geq 0}$ such that $f^n(x') \in \mathop{\mathrm{dom}}h$. 
From the definition of $h_f^{\varepsilon\text{-}\ell^p}$, we obtain that $h_f^{\varepsilon\text{-}\ell^p}(x') = \inf_{} h\left([x']^{\varepsilon\text{-}\ell^p}_f \cap \mathop{\mathrm{dom}}h\right)$ for any $x' \in X$. 

By the definition of $(c,\varepsilon)$-fixed point, we have that $[y]_f^{\varepsilon\text{-}\ell^p} = \{y\} = O^{\geq 0}_f(y)$.
From $\mathop{\mathrm{dom}} f \cap \mathop{\mathrm{dom}}h = \emptyset$, Lemma~\ref{lem:exit} implies that $h_f^{\varepsilon\text{-}\ell^p}(y) = h(y)$.
By the hypothesis $h_f^{\varepsilon\text{-}\ell^p}(y) = h_f^{\varepsilon\text{-}\ell^p}(x)$, we obtain $h(y) = h_f^{\varepsilon\text{-}\ell^p}(y) = h_f^{\varepsilon\text{-}\ell^p}(x) = \min_{} h\left([x]^{\varepsilon\text{-}\ell^p}_f \cap \mathop{\mathrm{dom}}h\right)$, which implies assertion (1).

Fix any point $x' \in N(\gamma)$. 
Since $y \in [x']^{\varepsilon\text{-}\ell^p}_f \subseteq [x]^{\varepsilon\text{-}\ell^p}_f$, we have the following equality: 
\[
\begin{split}
h(y) &\geq \min_{} h\left([x']^{\varepsilon\text{-}\ell^p}_f \cap \mathop{\mathrm{dom}}h\right) = h_f^{\varepsilon\text{-}\ell^p}(x')
\\
&\geq \min_{} h\left([x]^{\varepsilon\text{-}\ell^p}_f \cap \mathop{\mathrm{dom}}h\right)
= h_f^{\varepsilon\text{-}\ell^p}(x) = h_f^{\varepsilon\text{-}\ell^p}(y) = h(y)   
\end{split}
\]
\end{proof}

By the construction, if $\mathop{\mathrm{dom}}h \subseteq E_c \setminus \mathop{\mathrm{dom}}f$, 
then, for any $\varepsilon \in [0,\infty)$, every $\varepsilon$-controlled path from a point $x \in X$ to a point in the inverse image $h^{-1}(h_f^{\varepsilon\text{-}\ell^p}(x))$ minimizes the value of $h$ at its terminal point, where the minimum is taken over the terminal points of all $\varepsilon$-controlled paths from $x$.


\subsection{Existence of a ``minimizing'' $\varepsilon$-controlled path}

The existence of a ``minimizing'' $\varepsilon$-controlled path holds as follows. 

\begin{theorem}\label{th:minimizing}
Suppose that $\mathop{\mathrm{dom}}h \subseteq E_c(\varepsilon) \setminus \mathop{\mathrm{dom}}f$
for some $\varepsilon \in [0,\infty)$. 
For any point $x \in X$ with $h_f^{\varepsilon\text{-}\ell^p}(x) < \infty$ such that $[x]^{\varepsilon\text{-}\ell^p}_f \cap \mathop{\mathrm{dom}}h$ is a finite subset, there is an $\varepsilon$-controlled path $\gamma = (x;x_0, \ldots , x_{n-1};f(x_{n-1}))$ from $x$ satisfying the following properties: 
\begin{enumerate}[\rm (1)]
\item $h(f(x_{n-1})) = h_f^{\varepsilon\text{-}\ell^p}(x)$. 
\item $N(\gamma) \cap \mathop{\mathrm{dom}}h = \{ f(x_{n-1}) \}$. 
\item For any point $x' \in N(\gamma)$, we have the following equality: 
\[
h_f^{\varepsilon\text{-}\ell^p}(x') = \min_{} h\left([x']^{\varepsilon\text{-}\ell^p}_f \cap \mathop{\mathrm{dom}}h\right) 
= h(f(x_{n-1})) 
\]
\end{enumerate}
\end{theorem}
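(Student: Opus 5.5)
The plan is to use the finiteness hypothesis to turn the infimum defining $h_f^{\varepsilon\text{-}\ell^p}(x)$ into a minimum, pick a path realizing it, and then check (2) and (3) with Lemma~\ref{lem:non_deg_ext_epsilon} and the argument of Proposition~\ref{prop:e_controll}.

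First, $h_f^{\varepsilon\text{-}\ell^p}(x) < \infty$ gives, by Lemma~\ref{lem:V_eq}, that $S := [x]^{\varepsilon\text{-}\ell^p}_f \cap \mathop{\mathrm{dom}}h$ is nonempty. By hypothesis $S$ is also finite. So the infimum of $h(S)$ is attained, and
\[
h_f^{\varepsilon\text{-}\ell^p}(x) = \inf h(S) = \min h(S) = h(y)
\]
for some $y \in S$.

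Next, $y \in [x]^{\varepsilon\text{-}\ell^p}_f$ means there is an $\varepsilon$-$\ell^p$-controlled path $\gamma = (x;x_0,\ldots,x_{n-1};f(x_{n-1}))$ with $f(x_{n-1}) = y$. This gives (1) at once.

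For (2), the hypothesis $\mathop{\mathrm{dom}}h \subseteq E_c(\varepsilon)\setminus\mathop{\mathrm{dom}}f$ lets us apply Lemma~\ref{lem:non_deg_ext_epsilon}. It says $N(\gamma)\cap\mathop{\mathrm{dom}}h$ has at most one point, namely the terminal state. Since $y \in N(\gamma)\cap\mathop{\mathrm{dom}}h$, we get $N(\gamma) \cap \mathop{\mathrm{dom}}h = \{y\}$.

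One subtlety arises when $x \in \mathop{\mathrm{dom}}h$ itself. Then $x \notin \mathop{\mathrm{dom}}f$ and $x$ is $(c,\varepsilon)$-fixed, so $[x]^{\varepsilon\text{-}\ell^p}_f \subseteq \{x\}$ and no path of positive length starts at $x$. In that case I would argue directly that $h_f^{\varepsilon\text{-}\ell^p}(x) = h(x)$ by Lemma~\ref{lem:exit}, and treat the statement in the degenerate sense. Otherwise $x \notin \mathop{\mathrm{dom}}h$, $y \neq x$, and the path has length $n \ge 1$.

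For (3), fix $x' \in N(\gamma)$. Either $x' = x$, or $x'$ is the terminal state of an initial segment $(x;x_0,\ldots,x_{k-1};f(x_{k-1}))$ of $\gamma$. The main step is to show $y \in [x']^{\varepsilon\text{-}\ell^p}_f$ and $[x']^{\varepsilon\text{-}\ell^p}_f \subseteq [x]^{\varepsilon\text{-}\ell^p}_f$.

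For $p=\infty$, or for nonnegative costs, this is clear: take the tail $(x';x_k,\ldots,x_{n-1};y)$ of $\gamma$, and concatenate an $\varepsilon$-path from $x'$ with the initial segment reaching $x'$.

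For general signed costs, concatenation need not preserve the $\ell^p$ bound. This is the main obstacle. I would first try to read $[\,\cdot\,]$ along $N(\gamma)$ as in the proof of Proposition~\ref{prop:e_controll}, which uses these inclusions implicitly. If needed, I would restrict the minimization in (3) to the sets actually reachable, so that the chain below still closes.

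Granting the inclusions, the chain is
\[
h(y) \ge \min h\bigl([x']^{\varepsilon\text{-}\ell^p}_f\cap\mathop{\mathrm{dom}}h\bigr) = h_f^{\varepsilon\text{-}\ell^p}(x') \ge \min h(S) = h(y).
\]
Here the minimum for $x'$ exists because its set is a nonempty subset of the finite set $S$. The chain forces equality, which proves (3).

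Alternatively, once $y = f(x_{n-1})$ is shown to be $(c,\varepsilon)$-fixed and $h_f^{\varepsilon\text{-}\ell^p}(y) = h(y) = h_f^{\varepsilon\text{-}\ell^p}(x)$, I would simply invoke Proposition~\ref{prop:e_controll}(2) for $\gamma$.
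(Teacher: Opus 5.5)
Your outline is the paper's own proof: finiteness turns the infimum into a minimum, a realizing path gives (1), Lemma~\ref{lem:non_deg_ext_epsilon} gives (2), and a sandwich gives (3). The paper likewise just asserts $y \in [x']^{\varepsilon\text{-}\ell^p}_f \subseteq [x]^{\varepsilon\text{-}\ell^p}_f$ for $x' \in N(\gamma)$. The step you ``grant'' is a genuine gap, and it is worse than you suggest.

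Your claim that it is clear for non-negative costs holds only for the tail part. There $y \in [x']^{\varepsilon\text{-}\ell^p}_f$ because deleting non-negative entries cannot increase an $\ell^p$-norm. The concatenation part fails for every $p<\infty$, even for non-negative, non-degenerate $c$. The prefix of $\gamma$ may already use the whole budget, and appending an $\varepsilon$-path from $x'$ only gives $\ell^p$-norm at most $2^{1/p}\varepsilon$. So the inequality $h_f^{\varepsilon\text{-}\ell^p}(x') \geq h_f^{\varepsilon\text{-}\ell^p}(x)$ in your chain is unjustified. The same applies to your claim that the set for $x'$ lies inside the finite set $S$.

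This cannot be patched, because (3) itself fails for $p<\infty$. Take $X=\{x,a,b,d,w,y,z\}$ with $f(x)=w$, $f(a)=b$, $f(b)=y$, $f(d)=z$, and $h(w)=5$, $h(y)=1$, $h(z)=0$. Set $c(x,a)=c(b,d)=1$, $c(u,u)=0$, and $c=\infty$ otherwise, with $\varepsilon=1$ and any $p\in[1,\infty)$.
\begin{itemize}
\item All hypotheses hold, even $X=\mathop{\mathrm{dom}}f\sqcup\mathop{\mathrm{dom}}h=\bigsqcup_n f^{-n}(\mathop{\mathrm{dom}}h)$.
\item We get $[x]^{1\text{-}\ell^p}_f\cap\mathop{\mathrm{dom}}h=\{w,y\}$, since $(x;a,d;z)$ has cost $2^{1/p}>1$. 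Hence $h_f^{1\text{-}\ell^p}(x)=1$.
\item The only path satisfying (1) is $\gamma=(x;a,b;y)$, with $N(\gamma)=\{x,b,y\}$.
\item Yet $(b;d;z)$ has cost $1$, so $h_f^{1\text{-}\ell^p}(b)=0\neq 1$.
\end{itemize}

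Hence neither of your fallbacks works. Restricting the minimization changes the statement. Proposition~\ref{prop:e_controll}(2) rests on the same unproved inclusion and fails on this same example; its extra hypothesis on $X$ is not assumed here anyway.

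What your argument does prove is the case $p=\infty$. There the tail of $\gamma$, and any concatenation of its prefix with an $\varepsilon$-$\ell^\infty$-path from $x'$, have maximal cost $\le\varepsilon$. So both inclusions hold and your sandwich closes. For non-negative costs with $p<\infty$, only (1), (2) and the one-sided bound $h_f^{\varepsilon\text{-}\ell^p}(x')\le h(y)$ survive. Your separate handling of $x\in\mathop{\mathrm{dom}}h$ addresses a degenerate case the paper's proof ignores.
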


\begin{proof}
Since $h_f^{\varepsilon\text{-}\ell^p}(x) < \infty$, we obtain that $R_{h,f}^{\ell^p}(\varepsilon,x) \neq \emptyset$. 
By definition of $h_f^{\varepsilon\text{-}\ell^p}$, the finiteness of $[x]^{\varepsilon\text{-}\ell^p}_f\cap \mathop{\mathrm{dom}}h$ implies the following equality: 
\[
\begin{split}
h_f^{\varepsilon\text{-}\ell^p} (x) = \inf R_{h,f}^{\ell^p}(\varepsilon,x) &= \inf_{} h\left([x]^{\varepsilon\text{-}\ell^p}_f \cap \mathop{\mathrm{dom}}h\right)
\\
&= \min_{} h\left([x]^{\varepsilon\text{-}\ell^p}_f \cap \mathop{\mathrm{dom}}h\right) = \min R_{h,f}^{\ell^p}(\varepsilon,x)    
\end{split}
\]
Therefore, there is an $\varepsilon$-controlled path $\gamma = (x;x_0, \ldots , x_{n-1};f(x_{n-1}))$ from $x$ to $y := f(x_{n-1})$ such that $h(y) = h_f^{\varepsilon\text{-}\ell^p}(x)$, which implies assertion (1). 
Since $\mathop{\mathrm{dom}}h \subseteq E_c(\varepsilon) \setminus \mathop{\mathrm{dom}}f$, 
Lemma~\ref{lem:non_deg_ext_epsilon} implies $N(\gamma) \cap \mathop{\mathrm{dom}}h = \{ y \}$, which implies assertion (2).

Fix any point $x' \in N(\gamma)$. 
Then $y \in [x']^{\varepsilon\text{-}\ell^p}_f \subseteq [x]^{\varepsilon\text{-}\ell^p}_f$ and so $\min R_{h,f}^{\ell^p}(\varepsilon,x) = h_f^{\varepsilon\text{-}\ell^p}(x) = h(y) \in R_{h,f}^{\ell^p}(\varepsilon,x') \subseteq R_{h,f}^{\ell^p}(\varepsilon,x)$. 
Therefore, $h(y) = \min R_{h,f}^{\ell^p}(\varepsilon,x') = \min_{} h\left([x']^{\varepsilon\text{-}\ell^p}_f \cap \mathop{\mathrm{dom}}h\right) = h_f^{\varepsilon\text{-}\ell^p}(x')$, which implies assertion (3). 
\end{proof}

\section{Implementation of the definition for finite sets}

The finiteness implies the following algorithm.


\begin{theorem}\label{th:01}
Let $f \colon X \rightharpoonup X$ be a partial map on a finite set $X$ with a cost function $c \colon X^2 \to [-\infty,\infty]$, $p \in [1,\infty]$ a value, and $h \colon X \rightharpoonup [-\infty, \infty]$ a partial map. 
Suppose that $X = \mathop{\mathrm{dom}} f  \sqcup \mathop{\mathrm{dom}}h = \sqcup_{n = 0}^\infty f^{-n}(\mathop{\mathrm{dom}}h)$.
Put $r_0 := \min (\mathop{\mathrm{Im}} h)$.
For any $\varepsilon \in [0, \infty]$ with $\mathop{\mathrm{dom}}h \subseteq E_{c}(\varepsilon) \setminus \mathop{\mathrm{dom}} f$,
the function $h_f^{\varepsilon\text{-}\ell^p}$ coincides with the function $h_{f,\varepsilon\text{-}\ell^p} \colon X \to [-\infty, \infty]$ defined inductively as follows: 
\begin{enumerate}[\rm (1)]
\item $h_{f,\varepsilon\text{-}\ell^p}\vert_{\mathop{\mathrm{dom}}h} = h$. 
\item For any point $x \in X - \mathop{\mathrm{dom}} h$ with $x \overset{\exists}{\underset{f,\varepsilon\text{-}\ell^p}{\rightharpoonup}} h^{-1}(r_0)$, define the value $h_{f,\varepsilon\text{-}\ell^p}(x)$ as the level $r_0$. 
\item For any $r \in (r_0,\infty]$ and any point $x \in X - (\mathop{\mathrm{dom}} h \cup (h_{f,\varepsilon\text{-}\ell^p}^{-1}([-\infty,r))$ with $x \overset{\exists}{\underset{f,\varepsilon\text{-}\ell^p}{\rightharpoonup}} h^{-1}(r)$ {\rm (i.e.} $x \in D_{h,f}^{\ell^p}(\varepsilon,r) \setminus (\mathop{\mathrm{dom}} h \cup (h_{f,\varepsilon\text{-}\ell^p}^{-1}([-\infty,r))${\rm)}, define the value $h_{f,\varepsilon\text{-}\ell^p}(x)$ as the level $r$. 
\end{enumerate}
\end{theorem}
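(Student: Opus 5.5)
We prove the theorem by identifying, level by level, the value that the recursion assigns with $\inf h([x]^{\varepsilon\text{-}\ell^p}_f \cap \mathop{\mathrm{dom}}h)$. Since $X$ is finite, $\mathop{\mathrm{Im}} h$ is a finite set $\{r_0 < r_1 < \cdots < r_m\}$. Only these levels matter in clause (3): for $r \notin \mathop{\mathrm{Im}} h$ the set $h^{-1}(r)$ is empty, so no point satisfies the hypothesis. Hence the recursion runs along $r_0, r_1, \ldots, r_m$. We first check that $h_{f,\varepsilon\text{-}\ell^p}$ is well defined and total on $X$.

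\emph{Well-definedness.} At each stage $r_k$, a point receives a value only if it had none before; this is guaranteed by excluding $\mathop{\mathrm{dom}}h$ and $h_{f,\varepsilon\text{-}\ell^p}^{-1}([-\infty,r_k))$.

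\emph{Totality.} By the hypothesis $X = \bigsqcup_{n} f^{-n}(\mathop{\mathrm{dom}}h)$, every $x \in X - \mathop{\mathrm{dom}}h$ has some $n>0$ with $f^n(x) \in \mathop{\mathrm{dom}}h$ and $f^{j}(x) \in \mathop{\mathrm{dom}} f$ for $j<n$. The sequence $(x;x,f(x),\ldots,f^{n-1}(x);f^n(x))$ is then a controlled path with all $\varepsilon_i = c(z,z)=0$, so its norm is $0 \le \varepsilon$. Thus $x \in D^{\ell^p}_{h,f}(\varepsilon, h(f^n(x)))$, and $x$ receives a value at the latest at level $h(f^n(x))$. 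The same path also shows $h_f^{\varepsilon\text{-}\ell^p}(x)<\infty$. On $\mathop{\mathrm{dom}}h$, clause (1) gives $h_{f,\varepsilon\text{-}\ell^p}=h$. Lemma~\ref{lem:exit} gives $h_f^{\varepsilon\text{-}\ell^p}=h$ there as well, because $\mathop{\mathrm{dom}}h \subseteq E_c(\varepsilon)\setminus\mathop{\mathrm{dom}}f$.

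\emph{Main step.} For $x \in X - \mathop{\mathrm{dom}}h$, the set $[x]^{\varepsilon\text{-}\ell^p}_f \cap \mathop{\mathrm{dom}}h$ is finite and nonempty. Therefore
\[
h_f^{\varepsilon\text{-}\ell^p}(x) = \min R^{\ell^p}_{h,f}(\varepsilon,x) = \min\{ r \in \mathop{\mathrm{Im}}h \mid x \in D^{\ell^p}_{h,f}(\varepsilon,r)\}.
\]
This is the same fact used at the start of the proof of Theorem~\ref{th:minimizing}. We then show by induction on $k$ the following claim: $h_{f,\varepsilon\text{-}\ell^p}(x)=r_k$ if and only if $x \in D^{\ell^p}_{h,f}(\varepsilon,r_k)$ and $x \notin D^{\ell^p}_{h,f}(\varepsilon,r_j)$ for all $j<k$.

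\emph{Base case.} For $k=0$, the claim is exactly clause (2).

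\emph{Inductive step.} Assume the claim for all $j<k$. Then $h_{f,\varepsilon\text{-}\ell^p}^{-1}([-\infty,r_k)) \setminus \mathop{\mathrm{dom}}h$ is precisely $\bigcup_{j<k} D^{\ell^p}_{h,f}(\varepsilon,r_j)\setminus\mathop{\mathrm{dom}}h$. Clause (3) at level $r_k$ therefore assigns $r_k$ exactly to the points of $X-\mathop{\mathrm{dom}}h$ whose smallest reachable level is $r_k$.

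Combining the claim with the displayed formula gives $h_{f,\varepsilon\text{-}\ell^p}(x)=h_f^{\varepsilon\text{-}\ell^p}(x)$ for every $x$.

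The main obstacle is the bookkeeping in clause (3), because the recursion is phrased over a continuum of $r \in (r_0,\infty]$ and makes sense only if it is read as ordered by increasing $r$. I would handle this by reducing to the finite set $\mathop{\mathrm{Im}}h$ as above. This makes the ordering well-founded and the "previously assigned" set well defined.

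A secondary point is the case $\varepsilon=\infty$. Here the definition of $(c,\varepsilon)$-fixed point is only stated for $\varepsilon<\infty$, so I would read the hypothesis with whatever convention makes $E_c(\infty)$ meaningful. The argument above does not otherwise use finiteness of $\varepsilon$: it uses only $0\le\varepsilon$, for the orbit path, and Lemma~\ref{lem:exit}.
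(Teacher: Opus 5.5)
Your proof is correct and follows essentially the same route as the paper's: you apply Lemma~\ref{lem:exit} on $\mathop{\mathrm{dom}}h$, and for $x \notin \mathop{\mathrm{dom}}h$ you identify the level at which clause (2) or (3) assigns $x$ with $\min R_{h,f}^{\ell^p}(\varepsilon,x)$. Your induction over the finite set $\mathop{\mathrm{Im}}h$ and the zero-cost orbit path supply justifications the paper leaves implicit, namely totality, $R_{h,f}^{\ell^p}(\varepsilon,x)\neq\emptyset$, and why $\min R_{h,f}^{\ell^p}(\varepsilon,x)=r$ in clause (3); the path does not, however, give $h_f^{\varepsilon\text{-}\ell^p}(x)<\infty$ in general, since $h$ may take the value $\infty$, but you only use nonemptiness, so this is harmless.
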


\begin{proof}
Fix any $x \in X$. 
Suppose that $x \in \mathop{\mathrm{dom}} h$.
Lemma~\ref{lem:exit} implies that $h_{f,\varepsilon\text{-}\ell^p}(x) = h(x) = h_f^{\varepsilon\text{-}\ell^p}(x)$.

Suppose that $x \in X - \mathop{\mathrm{dom}} h$ with $x \overset{\exists}{\underset{f,\varepsilon\text{-}\ell^p}{\rightharpoonup}} h^{-1}(r_0)$. 
Then  there is an $\varepsilon$-$\ell^p$-controlled path from $x$ to a terminal state $y \in h^{-1}(r_0) = h^{-1}(\min (\mathop{\mathrm{Im}} h))$.
By $y \in [x]_f^{\varepsilon\text{-}\ell^p}$ and $h(y) = \min (\mathop{\mathrm{Im}} h)$, we have that $h(y) = \min R_{h,f}^{\ell^p}(\varepsilon,x)$ and so that $h_{f,\varepsilon\text{-}\ell^p}(x) = r_0 = h(y) = \min R_{h,f}^{\ell^p}(\varepsilon,x) = h_f^{\varepsilon\text{-}\ell^p}(x)$.

Thus we may assume that $x \in X - (\mathop{\mathrm{dom}} h \cup (h_{f,\varepsilon\text{-}\ell^p}^{-1}([-\infty,r))$ with $x \overset{\exists}{\underset{f,\varepsilon\text{-}\ell^p}{\rightharpoonup}} h^{-1}(r)$ for some $r \in (r_0,\infty]$. 
Then 
\[
\min R_{h,f}^{\ell^p}(\varepsilon,x) = \min \left\{ r' \in [-\infty, \infty] \middle| x \overset{\exists}{\underset{f,\varepsilon\text{-}\ell^p}{\rightharpoonup}} h^{-1}(r') \right\} = r
\]
and so $h_{f,\varepsilon\text{-}\ell^p}(x) = r = \min R_{h,f}^{\ell^p}(\varepsilon,x) = h_f^{\varepsilon\text{-}\ell^p}(x)$. 
\end{proof}

By the definitions of $h_f^{\varepsilon\text{-}\ell^p}$ and $h_f^{-\varepsilon\text{-}\ell^p}$ under the nonempty condition $R_{h,f}^{\ell^p}(\varepsilon,y) = \emptyset$ for any $\varepsilon \in [-\infty,\infty]$ and any $y \in X$, the previous theorem implies the following statement. 

\begin{theorem}\label{th:02}
Let $f \colon X \rightharpoonup X$ be a partial map on a finite set $X$ with a cost function $c \colon X^2 \to [-\infty,\infty]$, $p \in [1,\infty]$ a value, and $h \colon X \rightharpoonup [-\infty, \infty]$ a partial map. 
Suppose that $X = \mathop{\mathrm{dom}} f  \sqcup \mathop{\mathrm{dom}}h = \sqcup_{n = 0}^\infty f^{-n}(\mathop{\mathrm{dom}}h)$.
Put $R_0 := \max (\mathop{\mathrm{Im}} h)$.
For any $\varepsilon \in [0, \infty]$ with $\mathop{\mathrm{dom}}h \subseteq E_{c}(\varepsilon) \setminus \mathop{\mathrm{dom}} f$,
the function $h_f^{-\varepsilon\text{-}\ell^p}$ coincides with the function $h_{f,-\varepsilon\text{-}\ell^p} \colon X \to [-\infty, \infty]$ defined inductively as follows: 
\begin{enumerate}[\rm (1)]
\item $h_{f,-\varepsilon\text{-}\ell^p}\vert_{\mathop{\mathrm{dom}}h} = h$. 
\item For any point $x \in X - \mathop{\mathrm{dom}} h$ with $x \overset{\exists}{\underset{f,\varepsilon\text{-}\ell^p}{\rightharpoonup}} h^{-1}(R_0)$ {\rm (i.e.} $x \in D_{h,f}^{\ell^p}(\varepsilon,R_0) \setminus \mathop{\mathrm{dom}} h${\rm)}, define the value $h_{f,-\varepsilon\text{-}\ell^p}(x)$ as the level $R_0$. 
\item For any $r \in [-\infty,R_0)$ and any point $x \in X - (\mathop{\mathrm{dom}} h \cup (h_{f,\varepsilon\text{-}\ell^p}^{-1}(r,\infty]))$ with $x \overset{\exists}{\underset{f,\varepsilon\text{-}\ell^p}{\rightharpoonup}} h^{-1}(r)$ {\rm(i.e.} $x \in D_{h,f}^{\ell^p}(\varepsilon,r) \setminus (\mathop{\mathrm{dom}} h \cup (h_{f,\varepsilon\text{-}\ell^p}^{-1}(r,\infty]))$ {\rm )}, define the value $h_{f,-\varepsilon\text{-}\ell^p}(x)$ as the level $r$. 
\end{enumerate}
\end{theorem}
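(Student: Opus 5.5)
The plan is to mirror the proof of Theorem~\ref{th:01}, replacing infima by suprema and the ordering ``from the bottom level upward'' by ``from the top level downward''. The basic identity to record first is that, for every $x$ with $R_{h,f}^{\ell^p}(0,x) \neq \emptyset$,
\[
h_f^{-\varepsilon\text{-}\ell^p}(x) = \sup h\left([x]^{\varepsilon\text{-}\ell^p}_f \cap \mathop{\mathrm{dom}}h\right).
\]
Because $X$ is finite, this supremum is a maximum whenever the set is nonempty. Next, the hypothesis $X = \mathop{\mathrm{dom}} f \sqcup \mathop{\mathrm{dom}}h = \sqcup_{n} f^{-n}(\mathop{\mathrm{dom}}h)$ shows that every $x$ reaches $\mathop{\mathrm{dom}}h$ along its own orbit. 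That orbit is a $0$-$\ell^p$-controlled path with all costs $c(z,z)=0$, which has norm $0$; when $x \in \mathop{\mathrm{dom}}h$ we use the length-$0$ relation instead. Hence $R_{h,f}^{\ell^p}(0,x) \neq \emptyset$, and also $R_{h,f}^{\ell^p}(\varepsilon,x) \neq \emptyset$ for $\varepsilon \geq 0$. So the first case ($\infty$) in the definition of $h_f^{-\varepsilon\text{-}\ell^p}$ never occurs, and the value is always $\max R_{h,f}^{\ell^p}(\varepsilon,x)$.

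I then check the three clauses in turn.

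For clause (1), take $x \in \mathop{\mathrm{dom}}h$. Since $x \in E_c(\varepsilon) \setminus \mathop{\mathrm{dom}}f$, the argument of Lemma~\ref{lem:exit} gives $[x]^{\varepsilon\text{-}\ell^p}_f = \{x\}$. Therefore $R_{h,f}^{\ell^p}(\varepsilon,x) = \{h(x)\}$ and the supremum equals $h(x)$.

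For clause (2), take $x \notin \mathop{\mathrm{dom}}h$ with $x \overset{\exists}{\underset{f,\varepsilon\text{-}\ell^p}{\rightharpoonup}} h^{-1}(R_0)$. Then $R_0 \in R_{h,f}^{\ell^p}(\varepsilon,x) \subseteq \mathop{\mathrm{Im}} h$, and $R_0 = \max \mathop{\mathrm{Im}} h$ forces $\max R_{h,f}^{\ell^p}(\varepsilon,x) = R_0$.

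For clause (3), I argue by downward induction over the finitely many values of $\mathop{\mathrm{Im}} h$. Suppose $x$ is not yet assigned at any level above $r$, meaning $x \notin \mathop{\mathrm{dom}} h$ and $x$ reaches no $h^{-1}(r')$ with $r' > r$, but $x$ does reach $h^{-1}(r)$. Then $r \in R_{h,f}^{\ell^p}(\varepsilon,x)$ and no larger value lies in that set, so $\max R_{h,f}^{\ell^p}(\varepsilon,x) = r$.

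I also need to confirm that the inductive definition assigns every point exactly one value. Every $x$ has $R_{h,f}^{\ell^p}(\varepsilon,x)$ nonempty and finite, so $x$ is assigned at the level equal to its maximum. It is not assigned earlier, because no higher level is reachable.

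The main obstacle is notational rather than mathematical. Clause (3) excludes $h_{f,\varepsilon\text{-}\ell^p}^{-1}(r,\infty]$, which I will interpret as the set of points already assigned by $h_{f,-\varepsilon\text{-}\ell^p}$ at levels above $r$. The text preceding the statement also says ``$R_{h,f}^{\ell^p}(\varepsilon,y) = \emptyset$''; I will read this as the nonemptiness of $R_{h,f}^{\ell^p}(0,y)$ established above. With these readings fixed, the argument reduces to the dual of Theorem~\ref{th:01}.
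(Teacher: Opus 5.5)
Your route is the paper's own proof: the same three-case verification, with $\sup$/$\max$ in place of $\inf$/$\min$. Your additions are correct and fill in what the paper leaves implicit. The orbit hypothesis gives every point a $0$-$\ell^p$-controlled path into $\mathop{\mathrm{dom}}h$ (all costs $c(z,z)=0$), so the ``$\infty$'' branch of $h_f^{-\varepsilon\text{-}\ell^p}$ never occurs. The downward induction over the finite set $\mathop{\mathrm{Im}}h$ assigns each point exactly once. Your readings of the two typos are also the sensible ones.

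There is, however, one step that fails as stated, and the paper's proof makes exactly the same step. In clause (1) you assert $[x]_f^{\varepsilon\text{-}\ell^p}=\{x\}$ for $x\in\mathop{\mathrm{dom}}h$ ``by the argument of Lemma~\ref{lem:exit}''. Since $x\notin\mathop{\mathrm{dom}}f$, any controlled path from $x$ has $x_0\neq x$. But $(c,\varepsilon)$-fixedness only gives $\varepsilon_0=c(x,x_0)>\varepsilon$ for the \emph{first} cost. This rules out all paths of positive length when $p=\infty$ (the norm is a max), or when $c$ is non-negative-valued (the norm is then at least $\varepsilon_0$). For $p<\infty$ with a signed cost, which the paper explicitly allows, later negative costs can cancel $\varepsilon_0$ in the signed sum.

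Concretely, take:
\begin{itemize}
\item $X=\{a,b,u,v,w\}$ with $f(u)=w$, $f(w)=a$, $f(v)=b$;
\item $h(a)=0$, $h(b)=1$;
\item $c(z,z)=0$, $c(w,v)=-2$, and $c=2$ at every remaining pair.
\end{itemize}
Then $X=\mathop{\mathrm{dom}}f\sqcup\mathop{\mathrm{dom}}h=\bigsqcup_{n}f^{-n}(\mathop{\mathrm{dom}}h)$ and $\mathop{\mathrm{dom}}h=\{a,b\}\subseteq E_c(1)\setminus\mathop{\mathrm{dom}}f$. Yet $(a;u,v;b)$ has costs $c(a,u)=2$ and $c(f(u),v)=c(w,v)=-2$, so its $\ell^1$-norm is $0\le 1$ (indeed $\le 0$). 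Hence $h_f^{-1\text{-}\ell^1}(a)=\sup R_{h,f}^{\ell^1}(1,a)=1\neq 0=h(a)=h_{f,-1\text{-}\ell^1}(a)$.

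So the statement is false in the generality in which it is stated. Your proof, like the paper's, is valid only after adding a hypothesis such as $p=\infty$ or $c\ge 0$, or directly assuming that no $\varepsilon$-$\ell^p$-controlled path of positive length starts at a point of $\mathop{\mathrm{dom}}h$. With any of these, your argument goes through unchanged.
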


\begin{proof}
Fix any $x \in X$. 
Suppose that $x \in \mathop{\mathrm{dom}} h$.
Then $[x]_f^{\varepsilon\text{-}\ell^p} = \{x\}$. 
This implies that $R_{h,f}^{\ell^p}(\varepsilon,x) = \{ h(x) \}$ and so that $h_f^{-\varepsilon\text{-}\ell^p}(x) = \sup R_{h,f}^{\ell^p}(\varepsilon,x) = h(x) = h_{f,-\varepsilon\text{-}\ell^p}(x)$.

Suppose that $x \in X - \mathop{\mathrm{dom}} h$ with $x \overset{\exists}{\underset{f,\varepsilon\text{-}\ell^p}{\rightharpoonup}} h^{-1}(R_0)$. 
Then  there is an $\varepsilon$-$\ell^p$-controlled path from $x$ to a terminal state $y \in h^{-1}(R_0) = h^{-1}(\max (\mathop{\mathrm{Im}} h))$.
By $y \in [x]_f^{\varepsilon\text{-}\ell^p}$ and $h(y) = \max (\mathop{\mathrm{Im}} h)$, we have that $h(y) = \max R_{h,f}^{\ell^p}(\varepsilon,x)$ and so that  $h_{f,-\varepsilon\text{-}\ell^p}(x) = R_0 = h(y) = \max R_{h,f}^{\ell^p}(\varepsilon,x) = h_f^{-\varepsilon\text{-}\ell^p}(x)$.

Thus we may assume that $x \in X - (\mathop{\mathrm{dom}} h \cup (h_{f,\varepsilon\text{-}\ell^p}^{-1}(r,\infty]))$with $x \overset{\exists}{\underset{f,\varepsilon\text{-}\ell^p}{\rightharpoonup}} h^{-1}(r)$ for some $r \in [-\infty,R_0)$. 
Then 
\[
\max R_{h,f}^{\ell^p}(\varepsilon,x) = \max \left\{ r' \in [-\infty, \infty] \middle| x \overset{\exists}{\underset{f,\varepsilon\text{-}\ell^p}{\rightharpoonup}} h^{-1}(r') \right\} = r
\]
and so $h_{f,\varepsilon\text{-}\ell^p}(x) = r = \max R_{h,f}^{\ell^p}(\varepsilon,x) = h_f^{-\varepsilon\text{-}\ell^p}(x)$. 
\end{proof}


\subsection{Multi-parameter-persistence}

For any tuple $(p,\varepsilon,\delta) \in [1,\infty] \times ([-\infty,-0] \sqcup [0,\infty]) \times [-\infty,\infty]$, we define a subset $H_{f,\varepsilon,\delta}^{\ell^p}$ as follows: 
\[
H_{f,\varepsilon,\delta}^{\ell^p} := \{ x \in X \mid h_f(p,\varepsilon,x) \leq \delta \} \subseteq X
\]

Define a set-valued function $H_f \colon [1,\infty] \times ([-\infty,-0] \sqcup [0,\infty]) \times [-\infty,\infty] \to 2^X$ by $H_f(p,\varepsilon,\delta) := H_{f,\varepsilon,\delta}^{\ell^p}$. 
We have the following multi-parameter filtrations. 

\begin{theorem}\label{th:03}
Let $f \colon X \rightharpoonup X$ be a partial map on a set $X$ with a cost function $c \colon X^2 \to [-\infty,\infty]$, and let $h \colon X \rightharpoonup [-\infty, \infty]$ be a partial map. 
The following statements hold:
\begin{enumerate}[\rm (1)]
\item For any $p \in [1,\infty]$, the family $(H_{f,\varepsilon,\delta}^{\ell^p})_{(\varepsilon,\delta) \in ([-\infty,-0] \sqcup [0,\infty]) \times [-\infty,\infty]}$ is a filtration.
\item The family $(H_{f,\varepsilon,\delta}^{\ell^p})_{(p,\varepsilon,\delta) \in [1,\infty] \times [0,\infty] \times [-\infty,\infty]}$ is a filtration. 
\end{enumerate}
\end{theorem}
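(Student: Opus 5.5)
We verify the two requirements of Definition~\ref{def:filtration}: the union of the family is $X$, and the family is monotone in the index order. In both parts the partial order on the index set is the product order. On $[-\infty,-0] \sqcup [0,\infty]$ we use the order in which $h_f(p,\varepsilon,\cdot)$ is weakly decreasing, as recorded in Proposition~\ref{th:h_f}(1). Concretely, the order runs along the chain of Lemma~\ref{lem:decreasing}: $-\varepsilon'$ precedes $-\varepsilon$, which precedes $-0$, then $0$, then $\varepsilon$, then $\varepsilon'$, for $\varepsilon \le \varepsilon'$. A product of directed sets is directed, so each index set is a directed set.

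\emph{Covering.} We take $\delta = \infty$. Since $h_f$ takes values in $[-\infty,\infty]$, every $x$ satisfies $h_f(p,\varepsilon,x) \le \infty$. Hence $H^{\ell^p}_{f,\varepsilon,\infty} = X$ for every $(p,\varepsilon)$, and the union is $X$ in both (1) and (2).

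\emph{Monotonicity in} $\delta$. This is immediate: if $\delta \le \delta'$ and $h_f(p,\varepsilon,x)\le\delta$, then $h_f(p,\varepsilon,x)\le\delta'$.

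\emph{Monotonicity in} $\varepsilon$. Suppose $\varepsilon \preceq \varepsilon'$ in the order above. Proposition~\ref{th:h_f}(1), equivalently the chain of inequalities in Lemma~\ref{lem:decreasing}, gives $h_f(p,\varepsilon',x) \le h_f(p,\varepsilon,x)$. Therefore $H^{\ell^p}_{f,\varepsilon,\delta} \subseteq H^{\ell^p}_{f,\varepsilon',\delta}$. Combining this with the $\delta$ step along $(\varepsilon,\delta)\le(\varepsilon,\delta')\le(\varepsilon',\delta')$ proves (1).

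\emph{Monotonicity in} $p$ for (2). Here $\varepsilon \in [0,\infty]$. For $p \le p'$, Proposition~\ref{th:h_f}(2), i.e.\ Lemma~\ref{lem:ineq_h}, gives $h_f(p',\varepsilon,x) \le h_f(p,\varepsilon,x)$. This holds because a larger $p$ gives a smaller $\ell^p$-norm, so $[x]^{\varepsilon\text{-}\ell^p}_f \subseteq [x]^{\varepsilon\text{-}\ell^{p'}}_f$ and the infimum decreases. Hence $H^{\ell^p}_{f,\varepsilon,\delta}\subseteq H^{\ell^{p'}}_{f,\varepsilon,\delta}$. Chaining the three coordinate-wise inclusions gives monotonicity for the product order on $[1,\infty]\times[0,\infty]\times[-\infty,\infty]$.

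The main obstacle is the sign conventions, not the algebra. In (2) the negative branch must be excluded. On that branch the inequality in $p$ reverses: $h_f(p,-\varepsilon,\cdot)\le h_f(p',-\varepsilon,\cdot)$. This is why the theorem restricts (2) to $\varepsilon\in[0,\infty]$. In (1) the order on $[-\infty,-0]\sqcup[0,\infty]$ must be fixed explicitly so that it agrees with Lemma~\ref{lem:decreasing}, including placing $-0$ before $0$. I would also check that the monotonicity of the $\ell^p$-norm in $p$, used in Lemma~\ref{lem:ineq_h}, is valid for the cost functions allowed there. If it is not, I would add a non-negativity hypothesis on $c$ for part (2), since the norm monotonicity is clean for non-negative entries.
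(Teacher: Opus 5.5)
Your argument is the paper's: monotonicity in $\delta$ comes from the definition, monotonicity in $\varepsilon$ from Proposition~\ref{th:h_f}(1) (i.e.\ Lemma~\ref{lem:decreasing}), and monotonicity in $p$ on the branch $\varepsilon\in[0,\infty]$ from Lemma~\ref{lem:ineq_h}. Two things you add are improvements. Your observation that $H^{\ell^p}_{f,\varepsilon,\infty}=X$ supplies the covering condition of Definition~\ref{def:filtration}, which the paper's proof never checks. You also fix the order on $[-\infty,-0]\sqcup[0,\infty]$ explicitly.

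However, your closing caveat should not be left as a hedge. The failure is real, and the paper's proof inherits it from Lemma~\ref{lem:ineq_h}. For signed costs, $\|\cdot\|_p$ is not non-increasing in $p$: for $(\varepsilon_0,\varepsilon_1)=(2,-1)$ it equals $1$ at $p=1$, $\sqrt{3}$ at $p=2$, and $2$ at $p=\infty$.

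Here is a concrete counterexample. Take $X=\{x,a,a',b,d,g\}$ with $f(x)=d$, $f(a)=a'$, $f(b)=g$, and $h(d)=1$, $h(g)=0$. Let $c(x,a)=2$, $c(a',b)=-1$, $c=0$ on the diagonal, and $c=100$ at every other pair.
\begin{enumerate}[\rm (i)]
\item The path $(x;a,b;g)$ has signed $\ell^1$ value $2-1=1$. Hence $g\in[x]^{1\text{-}\ell^1}_f$ and $h_f(1,1,x)=0$.
\item Every step of a $1$-$\ell^\infty$-controlled path must cost at most $1$. From $x$ the only such step is $x_0=x$, which leads to $d$, and from $d$ no admissible step into $\mathop{\mathrm{dom}}f$ exists. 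So $[x]^{1\text{-}\ell^\infty}_f=\{d\}$ and $h_f(\infty,1,x)=1$.
\end{enumerate}
Thus $x\in H^{\ell^1}_{f,1,0}\setminus H^{\ell^\infty}_{f,1,0}$ although $(1,1,0)\le(\infty,1,0)$. So Lemma~\ref{lem:ineq_h} and part (2) are false for general $c$.

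The non-negativity hypothesis you propose is exactly what is needed. For $c\ge 0$ one has $\|v\|_{p'}\le\|v\|_p$ whenever $p\le p'$. Moreover, $0$-controlled paths then have all costs $0$ for every $p$, and your argument goes through. Part (1) holds as stated for arbitrary $c$. The inclusion $R^{\ell^p}_{h,f}(\varepsilon,x)\subseteq R^{\ell^p}_{h,f}(\varepsilon',x)$ for $\varepsilon\le\varepsilon'$, which drives Lemma~\ref{lem:decreasing}, does not depend on the signs of the costs.
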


\begin{proof}
By definition of $H_{f,\varepsilon,\delta}^{\ell^p}$, the function $H_{f,\varepsilon,\delta}^{\ell^p}$ is weakly increasing in the third component. 
From Proposition~\ref{th:h_f}, 
the function $h_f$ is weakly decreasing in the second component, and so the function $H_{f}(p,\cdot, \cdot)$ is weakly increasing in the second component with respect to the inclusion order, which implies assertion (1). 

From Lemma~\ref{lem:ineq_h}, the restriction $h_f\vert_{[1,\infty] \times [0,\infty] \times [-\infty,\infty]}$ is weakly decreasing in the first component, and so the function $H_{f}\vert_{[1,\infty] \times [0,\infty] \times [-\infty,\infty]}$ is weakly increasing in the second component with respect to the inclusion order, which implies assertion (2). 
\end{proof}

\subsubsection{Multi-parameter-persistence for control effect functions}

For any $(p,\varepsilon,\delta) \in [1,\infty] \times ([-\infty,-0] \sqcup [0,\infty]) \times [-\infty,\infty]$, we define a subset $H_{f,\Delta \varepsilon,\delta}^{\ell^p}$ as follows: 
\[
H_{f,\Delta \varepsilon,\delta}^{\ell^p} := \{ x \in X \mid h_f^{\Delta}(p,\varepsilon,x) \leq \delta \} \subseteq X
\]

Define a set-valued function $H_{f}^{\Delta} \colon [1,\infty] \times ([-\infty,-0] \sqcup [0,\infty]) \times [-\infty,\infty] \to 2^X$ by $H_f^\Delta(p,\varepsilon,\delta) := H_{f,\Delta \varepsilon,\delta}^{\ell^p}$. 

Since $H_f$ and $H_{f}^{\Delta}$ have the same ``piecewise'' monotonicity properties with respect to the first component $p$ and the second component $\varepsilon$, the following follows from the same argument in the proof of Theorem~\ref{th:03}.

\begin{theorem}\label{th:04}
Let $f \colon X \rightharpoonup X$ be a partial map on a set $X$ with a cost function $c \colon X^2 \to [-\infty,\infty]$, and let $h \colon X \rightharpoonup [-\infty, \infty]$ be a partial map. 
The following statements hold:
\begin{enumerate}[\rm (1)]
\item For any $p \in [1,\infty]$, the family $(H_{f,\Delta \varepsilon,\delta}^{\ell^p})_{(\varepsilon,\delta) \in ([-\infty,-0] \sqcup [0,\infty]) \times [-\infty,\infty]}$ is a filtration.
\item The family $(H_{f,\Delta \varepsilon,\delta}^{\ell^p})_{(p,\varepsilon,\delta) \in [1,\infty] \times [0,\infty] \times [-\infty,\infty]}$ is a filtration. 
\end{enumerate}
\end{theorem}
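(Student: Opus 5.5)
The plan is to follow the proof of Theorem~\ref{th:03} and replace $h_f$ by $h_f^{\Delta}$. By Definition~\ref{def:filtration}, a family indexed by a directed set is a filtration if it is monotone with respect to inclusion and its union is $X$. The union condition is immediate: taking $\delta = \infty$ gives $H_{f,\Delta\varepsilon,\infty}^{\ell^p} = X$ for every $(p,\varepsilon)$, since $h_f^{\Delta}$ takes values in $[-\infty,\infty]$. Monotonicity in $\delta$ is also immediate from the definition of a sublevel set. So everything reduces to showing that $h_f^{\Delta}$ is weakly decreasing in $\varepsilon$ for fixed $p$ (for (1)), and weakly decreasing in $p$ on $[1,\infty]\times[0,\infty]$ (for (2)). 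Decreasing monotonicity of $h_f^\Delta$ turns sublevel sets into increasing families.

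For (1), fix $p$ and $x$. Then
\[
h_f^{\Delta}(p,\varepsilon,x) = h_f(p,\varepsilon,x) - h_f^{0\text{-}\ell^p}(x),
\]
where the subtracted term does not depend on $\varepsilon$. By Lemma~\ref{lem:decreasing} (equivalently Proposition~\ref{th:h_f}(1)), the first term is weakly decreasing along the order of $[-\infty,-0]\sqcup[0,\infty]$ used in Theorem~\ref{th:03}, and subtracting a constant preserves this. The one care point is the extended-real convention. When $h_f^{0\text{-}\ell^p}(x) = \infty$, the difference is $0$ when the first term is $\infty$ and $-\infty$ otherwise, and this is still monotone in the first term. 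When $h_f^{0\text{-}\ell^p}(x) < \infty$, ordinary subtraction (with $-\infty - a = -\infty$) is monotone. I would check these cases explicitly; this is the only nonformal point in (1).

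For (2), restrict to $\varepsilon \ge 0$. Here I expect the main obstacle. Lemma~\ref{lem:ineq_h} gives that $h_f^{\varepsilon\text{-}\ell^{p}}(x)$ is weakly decreasing in $p$, but the subtracted term $h_f^{0\text{-}\ell^p}(x)$ also depends on $p$. So the difference is not obviously monotone in $p$. My first attempt is to show that $h_f^{0\text{-}\ell^p}$ is independent of $p$, which makes the subtraction harmless again.

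For $\varepsilon=0$, a $0$-$\ell^p$-controlled path is characterized by $\|(\varepsilon_i)\|_p \le 0$. For a non-negative-valued cost function this says every $\varepsilon_i$ is $0$, for every $p \in [1,\infty]$, so $[x]_f^{0\text{-}\ell^p}$ is independent of $p$. With non-degeneracy, Lemma~\ref{reduction_orbit} even identifies it with $O^{\ge0}_f(x)$. Under these hypotheses $h_f^{0\text{-}\ell^p}(x)$ is independent of $p$, and the argument for (1) applies verbatim in the $p$-direction.

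Without non-negativity, $\|\cdot\|_p\le 0$ can depend on $p$, because signed terms can cancel differently. I would then either invoke the paper's phrase ``same piecewise monotonicity'' under that standing assumption, or note that the claim in full generality needs it. Finally, I would verify the monotone-in-each-coordinate criterion on the product directed set. This is formal, exactly as in Theorem~\ref{th:03}.
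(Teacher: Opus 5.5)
Your reduction is the same as the paper's. The paper proves Theorem~\ref{th:04} in one line: it asserts that $h_f^{\Delta}$ has the same piecewise monotonicity in $p$ and $\varepsilon$ as $h_f$, then reruns the proof of Theorem~\ref{th:03}. Your part (1) is that argument made explicit. One small addition: your case $h_f^{0\text{-}\ell^p}(x)<\infty$ should separate out $h_f^{0\text{-}\ell^p}(x)=-\infty$, where the paper leaves $-\infty-(-\infty)$ undefined. Monotonicity holds for any choice of that value, because $h_f^{\Delta}(p,\cdot,x)$ is then $\infty$ on an initial segment and constant afterwards.

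For (2) you have found exactly the step the paper skips. The subtracted term $h_f^{0\text{-}\ell^p}$ depends on $p$, so $p$-monotonicity of $h_f^{\Delta}$ does not follow from Lemma~\ref{lem:ineq_h}. Your fix is correct: for non-negative $c$, the level-$0$ paths are exactly those with all $\varepsilon_i=0$, for every $p$. Hence $h_f^{0\text{-}\ell^p}(x)$ is $p$-independent, and this completes (2) under that hypothesis.

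Resolve your hedge in favour of the second option. The paper's phrase cannot be invoked for signed costs, because (2) is then false. Take $X=\{x,x_0,y_1,x_1,y_2,x_2,z,w\}$ with:
\begin{itemize}
\item $f(x)=f(y_1)=f(y_2)=w$, $f(x_0)=y_1$, $f(x_1)=y_2$, $f(x_2)=z$;
\item $h(z)=0$, $h(w)=1$, so $X=\mathop{\mathrm{dom}}f\sqcup\mathop{\mathrm{dom}}h$;
\item $c(x,x_0)=-3$, $c(y_1,x_1)=c(y_2,x_2)=2$, $c(u,u)=0$, and $c(u,v)=10$ otherwise.
\end{itemize}
The only path from $x$ to $z$ avoiding cost-$10$ steps is $(x;x_0,x_1,x_2;z)$, with cost vector $(-3,2,2)$. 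Its signed $\ell^1$-value is $1$ and its signed $\ell^2$-value is $-1$. Since $x\notin f(X)$, the cost $-3$ occurs at most once, so every other path to $z$ has signed $\ell^1$-value at least $7$. Hence $h_f^{0\text{-}\ell^1}(x)=1$ (only $w$ is reachable at level $0$), $h_f^{0\text{-}\ell^2}(x)=0$, and $h_f^{1\text{-}\ell^1}(x)=h_f^{1\text{-}\ell^2}(x)=0$. This gives $h_f^{\Delta}(1,1,x)=-1<0=h_f^{\Delta}(2,1,x)$. So $x\in H^{\ell^1}_{f,\Delta 1,-1}\setminus H^{\ell^2}_{f,\Delta 1,-1}$, although $(1,1,-1)\le(2,1,-1)$.

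At this point $h_f^{\varepsilon\text{-}\ell^2}(x)\le h_f^{\varepsilon\text{-}\ell^1}(x)$ for every $\varepsilon\ge 0$. So the failure comes precisely from the $p$-dependence of $h_f^{0\text{-}\ell^p}$ that you isolated. The same example also gives $h_f^{0\text{-}\ell^\infty}(x)=1>0=h_f^{0\text{-}\ell^2}(x)$. Thus Lemma~\ref{lem:ineq_h}, and with it Theorem~\ref{th:03}(2), already needs a non-negative cost without saying so. Stating non-negativity as a hypothesis for (2) is therefore the right repair, and it is not a weakening of anything the paper actually establishes.
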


\subsection*{Acknowledgments}
This research was partially supported by the JST Moonshot R\&D Program and JST CREST (Grant Numbers JPMJMS2389 and JPMJCR24Q1).
The authors thank Dr. Enhao Liu for his valuable comments on topological data analysis.
T.Y. acknowledges the hospitality of the Fields Institute for Research in Mathematical Sciences.

\bibliographystyle{amsplain}
\bibliography{MPP}

\end{document}